\definecolor{mahogany}{cmyk}{0, 0.77, 0.87, 0}
\definecolor{salmon}{cmyk}{0, 0.53, 0.38, 0}
\definecolor{melon}{cmyk}{0, 0.46, 0.50, 0}
\definecolor{yellowgreen}{cmyk}{0.44, 0, 0.74, 0}
\definecolor{brickred}{cmyk}{0, 0.89, 0.94, 0.28}
\definecolor{OliveGreen}{cmyk}{0.64, 0, 0.95, 0.40}
\definecolor{RawSienna}{cmyk}{0, 0.72, 1.0, 0.45}
\definecolor{ZurichRed}{rgb}{1, 0, 0} 
\begin{document}

\newtheorem{lemma}[thm]{Lemma}
\newtheorem{corr}[thm]{Corollary}
\newtheorem{proposition}{Proposition}
\newtheorem{theorem}{Theorem}[section]
\newtheorem{deff}[thm]{Definition}
\newtheorem{case}[thm]{Case}
\newtheorem{prop}[thm]{Proposition}
\newtheorem{example}{Example}

\newtheorem{corollary}{Corollary}

\theoremstyle{definition}
\newtheorem{remark}{Remark}

\numberwithin{equation}{section}
\numberwithin{definition}{section}
\numberwithin{corollary}{section}

\numberwithin{theorem}{section}

\numberwithin{remark}{section}
\numberwithin{example}{section}
\numberwithin{proposition}{section}

\newcommand{\calD}{\bD}

\newcommand{\conjugate}[1]{\overline{#1}}
\newcommand{\abs}[1]{\left| #1 \right|}
\newcommand{\cl}[1]{\overline{#1}}
\newcommand{\expr}[1]{\left( #1 \right)}
\newcommand{\set}[1]{\left\{ #1 \right\}}

\newcommand{\calC}{\mathcal{C}}
\newcommand{\calE}{\mathcal{E}}
\newcommand{\calF}{\mathcal{F}}
\newcommand{\Rd}{\mathbb{R}^d}
\newcommand{\BR}{\bD(\Rd)}
\newcommand{\R}{\mathbb{R}}
\newcommand{\al}{\alpha}
\newcommand{\RR}[1]{\mathbb{#1}}
\newcommand{\bR}{\mathrm{I\! R\!}}
\newcommand{\ga}{\gamma}
\newcommand{\om}{\omega}
\newcommand{\A}{\mathbb{A}}
\newcommand{\bH}{\mathbb{H}}

\newcommand{\bb}[1]{\mathbb{#1}}
\newcommand{\bI}{\bb{I}}
\newcommand{\bN}{\bb{N}}

\newcommand{\uS}{\mathbb{S}}
\newcommand{\M}{{\mathcal{M}}}
\newcommand{\calB}{{\mathcal{B}}}

\newcommand{\W}{{\mathcal{W}}}

\newcommand{\m}{{\mathcal{m}}}

\newcommand {\mac}[1] { \mathbb{#1} }

\newcommand{\bD}{\mathbb{D}}

\newcommand{\bC}{\Bbb C}

\newtheorem{rem}[theorem]{Remark}
\newtheorem{dfn}[theorem]{Definition}
\theoremstyle{definition}
\newtheorem{ex}[theorem]{Example}
\numberwithin{equation}{section}

\newcommand{\Pro}{\mathbb{P}}
\newcommand\F{\mathcal{F}}
\newcommand\E{\mathbb{E}}
\newcommand\e{\varepsilon}
\def\H{\mathcal{H}}
\def\t{\tau}

\newcommand{\blankbox}[2]{%
  \parbox{\columnwidth}{\centering
    \setlength{\fboxsep}{0pt}%
    \fbox{\raisebox{0pt}[#2]{\hspace{#1}}}%
  }%
}

\title[Fourier multipliers and Astala's theorem]{On Astala's theorem for martingales and Fourier multipliers}

\author{Rodrigo Ba\~nuelos}\thanks{R. Ba\~nuelos is supported in part  by NSF Grant
\# 0603701-DMS}
\address{Department of Mathematics, Purdue University, West Lafayette, IN 47907, USA}
\email{banuelos@math.purdue.edu}
\author{Adam Os\c ekowski}\thanks{A. Os\c ekowski is supported in part by Polish Ministry of Science
and Higher Education (MNiSW) grant IP2011 039571 `Iuventus Plus'}
\address{Department of Mathematics, Informatics and Mechanics, University of Warsaw, Banacha 2, 02-097 Warsaw, Poland}
\email{ados@mimuw.edu.pl}

\begin{abstract}
We exhibit a large class of symbols $m$ on $\R^d$, $d\geq 2$,  for which the corresponding Fourier multipliers $T_m$ satisfy the following inequality. If $D$, $E$ are measurable subsets of $\R^d$ with $E\subseteq D$ and $|D|<\infty$, then
$$ \int_{D\setminus E} |T_{m}\chi_E(x)|\mbox{d}x\leq \begin{cases}
 |E|+|E|\ln\left(\frac{|D|}{2|E|}\right), & \mbox{if }|E|<|D|/2,\\
 |D\setminus E|+\frac{1}{2}|D \setminus E|\ln \left(\frac{|E|}{|D\setminus E|}\right), & \mbox{if }|E|\geq |D|/2.  
\end{cases}$$
Here $|\cdot|$ denotes the Lebesgue measure on $\bR^d$. When $d=2$, these multipliers  include the real and imaginary parts of the Beurling-Ahlfors operator $B$ and hence the inequality is also valid for $B$ with the right-hand side multiplied by $\sqrt{2}$. The inequality is sharp for the real and imaginary parts of $B$.  This work is  motivated by K. Astala's celebrated results on the Gehring--Reich conjecture concerning  the distortion of area by quasiconformal maps.  The proof rests on probabilistic methods and exploits a family of appropriate novel sharp inequalities for differentially subordinate martingales.   
These martingale bounds are of interest on their own right.  
\end{abstract}

\maketitle

\section{Introduction}
The motivation for the results obtained in this paper come from an important estimate for the Beurling-Ahlfors transform $B$ on $\mathbb{C}$ proved by K. Astala in \cite{Ast}. 
This operator is a Fourier multiplier with the symbol $m(\xi)=\overline{\xi}/\xi$, $\xi\in \mathbb{C}$, and can also be defined as the  singular integral operator 
$$ Bf(z)=-\frac{1}{\pi}\,\mbox{p.v.}\int_\mathbb{C} \frac{f(w)}{(z-w)^2}\mbox{d}w.$$
A fundamental property of the Beurling-Ahlfors transform is that it changes the complex derivative $\overline{\partial}$ to $\partial$. More precisely, we have
\begin{equation}\label{intert}
 B(\overline{\partial} f)={\partial}f,
\end{equation}
for any $f$ in  the Sobolev space $ W^{1,2}(\mathbb{C},\mathbb{C})$ of complex valued locally integrable functions on $\mathbb{C}$, whose distributional first derivatives are in $L^2$ on the plane. 
The Beurling-Ahlfors operator plays a fundamental  role in the theory of quasiconformal
mappings in the plane, as detailed in \cite{AstIwaMar}. Recall that a homeomorphism $F : \mathbb{C}\to\mathbb{C}$ is said to
be $K$-quasiconformal, $K\geq 1$, if $F\in W_{loc}^{1,2}(\mathbb{C},\mathbb{C})$ and if $|\bar{\partial} F(z)|\leq k|\partial F(z)|$ for almost all $z\in \mathbb{C}$, where
$k = (K -1)/(K + 1)$. In the 50's, Bojarski \cite{Bo}, \cite{Bo2} applied the $L^p$-boundedness
of $B$ to prove that partial derivatives of $K$-quasiconformal maps, which are a priori locally square integrable, belong in fact to
$L^p_{loc}$ for some $p > 2$ which depends only on $K$. By H\"older's inequality, this stronger integrability
yields the distortion of area by quasiconformal maps. The formal statement of this phenomenon is as follows: if $F(0) = 0$ and $F(1) = 1$, then for all measurable subsets $E$ of the unit disc $\bD=\{z\in\mathbb{C}:|z|<1\}$ we have
\begin{equation}\label{gr}
 |F(E)|\leq C|E|^\kappa,
\end{equation}
where $|\cdot|$ denotes the Lebesgue measure on the plane, and the constants $C$ and $\kappa$ depend only on $K$. 
Gehring and Reich conjectured in \cite{GR} that the least possible $\kappa$ for which \eqref{gr} holds equals  $1/K$. This conjecture was open for many years, and was finally proved to be true by Astala \cite{Ast} in the early 90's. 

We will be interested in the following related estimate. The weak-type $(1,1)$ and $L^2$ boundedness of the Beurling-Ahlfors operator imply the existence of some universal constants $c$ and $\alpha$ such that for any subset $E$ of the unit disc $\bD$,
\begin{equation}\label{gr2}
 \int_{\bD}|B(\chi_E)(z)|\mbox{d}z\leq c|E|\ln\left(\frac{\alpha}{|E|}\right).
\end{equation}
Gehring and Reich \cite{GR} proved that their conjecture is strictly related to the optimal value of the constant $c$. Roughly speaking, the conjecture is equivalent to proving that the best $c$ in \eqref{gr2}  equals $1$. The inequality
$$  \int_E|B(\chi_E)(z)|\mbox{d}z\leq |E|,\qquad E\subseteq \bD,$$
follows immediately from the unitary property of $B$ and H\"older's inequality and  thus, the main difficulty while studying \eqref{gr2} lies in controlling the size of the integral over $\bD\setminus E$. In his celebrated paper {\it ``Area distortion of quasiconformal mappings"},  Astala \cite[Corollary 1.7]{Ast} proved \eqref{gr2} with $c=1$ and some constant $\alpha\geq 1$.   In \cite{EH}, Eremenko and Hamilton proved inequality \eqref{gr2} with the best constant $\alpha$ as well. That is, they show that 
\begin{equation}\label{gr3}
 \int_{\bD\setminus E}|B(\chi_E)(z)|\mbox{d}z\leq |E|\ln\left(\frac{\pi}{|E|}\right)
\end{equation}
holds for all Borel subsets $E$ of $\bD$ and that $\pi$ is the best possible. This gives \eqref{gr2}  with $c=1$ and $\alpha=e\pi$. For more (much, much more) on the operator $B$ and its connections to quasiconformal mappings in the plane, see Astala, Iwaniec and Martin \cite{AstIwaMar}, and specially $\S 14.6$ for above inequalities.

The objective of this paper is to study the above estimates in a probabilistic context and apply these to a wider class of Fourier multipliers. As evidenced in many papers (see e.g. \cite{BMH}, \cite{BW}, \cite{BorJanVol}, \cite{GMS}), martingale methods play a crucial role in the analysis of various inequalities for the Beurling-Ahlfors operator. To illustrate this fruitful connection, let us recall the celebrated conjecture of Iwaniec \cite{Iw}, which states that
$$ ||B||_{L^p(\mathbb{C})\to L^p(\mathbb{C})}=p^*-1,$$
where $p^*=\max\{p,p/(p-1)\}$. 
The first step towards the conjecture was made by Ba\~nuelos and Wang \cite{BW}, who proved that the norm is bounded by $4(p^*-1)$, using the representation of $B$ in terms of Poisson martingales. Then Nazarov and Volberg \cite{NV} used Bellman function techniques to improved the bound to $2(p^*-1)$ and Ba\~nuelos and Mend\'ez-Hernandez \cite{BMH} showed that this can also be done with the techniques of \cite{BW} by replacing the Poisson martingales by heat martingales. The more recent results of Ba\~nuelos and Janakiraman \cite{BJ} and Boros, Janakiraman and Volberg \cite{BorJanVol}, which provide the most tight bounds thus far, are based on ``fine-tuning'' the  heat martingale methods from \cite{BMH}.

In this paper we present a probabilistic study of inequalities of the type  \eqref{gr3}  based on novel and more sophisticated estimates (compared to those used for $L^p$-bounds for $B$) for differentially subordinated  martingales. This will allow us to obtain a more general statement (see Theorem \ref{mainthm} below) for a much wider class of Fourier multipliers. In particular, our reasoning will lead us to a certain sharp version of the Eremenko-Hamilton inequality for the real and imaginary part of the Beurling-Ahlfors operator.   The results also hold for other multipliers, for example those  of Laplace transform-type, which are not in the form given below and even for more general singular integral operators which are not of convolution-type but which arise from conditional expectations of martingale transforms. 
Furthermore, our approach will also give  a sharp martingale analogue of 
the result of Eremenko and Hamilton, stated below in  Corollary \ref{EreHamAnalo}.   

Let us now give more details on the above statements. We will work with the following class of symbols, introduced by Ba\~nuelos and Bogdan in \cite{BB}. 
Let $\nu$ be a L\'evy measure on $\R^d$.  That is, $\nu$ is a nonnegative Borel measure on $\R^d$ with $\nu(\{0\})=0$ and
$$ \int_{\R^d}\min\{|x|^2,1\}\nu(\mbox{d}x)<\infty.$$
Assume further that $\mu$ is a finite nonnegative Borel measure on the unit sphere $\mathbb{S}$ of $\R^d$ and fix two Borel functions $\phi$ on $\R^d$ and $\psi$ on $\mathbb{S}$ which take values in the unit ball of $\mathbb{C}$. We define the associated multiplier $m=m_{\phi,\psi,\mu,\nu}$ on $\R^d$ by
\begin{equation}\label{defm}
 m(\xi)=\frac{\frac{1}{2}\int_\mathbb{S} \langle \xi,\theta\rangle^2\psi(\theta)\mu(\mbox{d}\theta)+\int_{\R^d}[1-\cos\langle \xi,x\rangle ]\phi(x)\nu(\mbox{d}x)}{\frac{1}{2}\int_\mathbb{S} \langle \xi,\theta\rangle^2\mu(\mbox{d}\theta)+\int_{\R^d}[1-\cos\langle \xi,x\rangle]\nu(\mbox{d}x)}
 \end{equation}
if the denominator is not $0$, and $m(\xi)=0$ otherwise. Here $\langle\cdot,\cdot\rangle$ stands for the scalar product on $\R^d$. 
Note that  
\begin{eqnarray}\label{levymult}
m\left(\xi\right)&=&
\frac{\frac{1}{2}\langle{\mathcal{A}}\xi, \xi\rangle+
      \int_{\bR^d} [1-\cos \langle \xi,  x\rangle
     ]\phi\left(x\right)\nu(\mbox{d}x) 
     }
     {\frac{1}{2}\langle{\mathcal{B}}\xi, \xi\rangle+
      \int_{\bR^d} [1-\cos \langle\xi, x\rangle]\nu(\mbox{d}x) 
     },
\end{eqnarray}
where
\begin{eqnarray*}
 {\mathcal{A}} = \left( \int_{\uS} \psi\left(\theta\right)
  \theta_{i}\theta_{j}\, \mu(\mbox{d}\theta)
 \right)_{i,j=1 \ldots d} \quad \mbox{and} \quad
 {\mathcal{B}} = \left( \int_{\uS}  \theta_{i}\theta_{j}\, \mu(\mbox{d}\theta) \right)_{i,j=1 \ldots d}
\end{eqnarray*}
are $d\times d$ symmetric matrices and $\mathcal{B}$ is
non-negative definite.   We observe that $\|m(\xi)\|_{\infty}\leq 1$ and denote by $T_m$ the Fourier multiplier operator   with symbol $m$ given by $\widehat{T_{m}f}(\xi)=m(\xi)\hat{f}(\xi)$.  We  call these {\it L\'evy multipliers}. Via stochastic integration and the L\'evy-Khintchine formula, which is already apparent in \eqref{levymult} (as the denominator is the real part of the symbol of a L\'evy process), the operators $T_m$  can be represented as projections of martingale transforms. We refer the reader to Ba\~nuelos, Bielaszewski and Bogdan \cite{BBB} and Applebaum and Ba\~nuelos \cite[\S5]{AppBan} for details.   Combining this representation with Burkholder's martingale inequalities (cf. \cite{B0}, \cite{W}), Ba\~nuelos and Bogdan \cite{BB} and Ba\~nuelos, Bielaszewski and Bogdan \cite{BBB} obtained the following $L^p$ bound, strictly related to Iwaniec's conjecture.

\begin{theorem}\label{LPP}
Let $1<p<\infty$ and let $m=m_{\phi,\psi,\mu,\nu}$ be given by \eqref{defm}. Then for any $f\in L^p(\R^d)$ we have
\begin{equation}\label{Lp}
 ||T_mf||_{L^p(\R^d)}\leq (p^*-1)||f||_{L^p(\R^d)},
\end{equation}
where $p^*=\max\{p,p/(p-1)\}$. The bound $(p^*-1)$ cannot be improved. 
\end{theorem}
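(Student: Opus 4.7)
The plan is to realize $T_m$ as the projection onto deterministic functions of a suitable martingale transform of a L\'evy-type stochastic integral, and then to invoke Burkholder's sharp $L^p$ inequality for differentially subordinate martingales, which asserts that for $\mathbb{C}$-valued martingales $N$ differentially subordinate to $M$ one has $\|N\|_{L^p}\leq (p^*-1)\|M\|_{L^p}$ with $(p^*-1)$ optimal.

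First I would fix a symmetric L\'evy process $X=(X_t)_{t\geq 0}$ on $\R^d$ whose Gaussian covariance is $\mathcal{B}$ and whose jump intensity is $\nu$; its characteristic exponent is precisely the denominator appearing in \eqref{levymult}. For $f\in C_c^\infty(\R^d)$ and a terminal time $T>0$, I would consider the parabolic extension $U^f(t,x)=P_{T-t}f(x)$, where $P$ is the semigroup of $X$. Applying It\^o's formula to $M^f_t:=U^f(t,X_t)$ decomposes this $\F_t$-martingale into a continuous Gaussian piece driven by the Brownian part of $X$ and a compensated-Poisson jump piece driven by $\tilde N(dt,dy)$. A transformed martingale $N^f$ is then obtained as the martingale transform that (i) multiplies the continuous integrand along direction $\theta\in\uS$ by $\psi(\theta)$ before averaging against $\mu$, and (ii) multiplies the jump integrand at displacement $y$ by $\phi(y)$. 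Because $\psi$ and $\phi$ take values in the closed unit disk, a pointwise comparison of quadratic variations on the continuous part and of jumps on the discontinuous part shows that $N^f$ is differentially subordinate to $M^f$ in the sense required by Burkholder's theorem.

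A direct Fourier computation using the L\'evy-Khintchine exponent --- essentially the Ba\~nuelos-Bielaszewski-Bogdan identity --- then identifies $T_m f$ with the conditional expectation $\E[N^f_T\mid X_T=\,\cdot\,]$ up to normalization, exhibiting $T_m f$ as a projection of $N^f_T$. The estimate \eqref{Lp} follows by chaining three inequalities: conditional expectation composed with push-forward is an $L^p$-contraction, giving $\|T_m f\|_{L^p}\leq \|N^f_T\|_{L^p}$; Burkholder's theorem yields $\|N^f_T\|_{L^p}\leq (p^*-1)\|M^f_T\|_{L^p}$; and $\|M^f_T\|_{L^p}=\|P_T f\|_{L^p}\leq \|f\|_{L^p}$ by contractivity of the semigroup. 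For the sharpness claim, it suffices to observe that the Hilbert transform on $\R$ is a special case of \eqref{defm} (with $d=1$, $\mu=0$ and an appropriate choice of $\phi$ and $\nu$), and the Hilbert transform is known to saturate $p^*-1$ by Pichorides' theorem.

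The main obstacle is the Fourier-analytic bookkeeping that matches the transformed martingale $N^f$ to the multiplier \eqref{defm}: one must verify that the $\psi$-weighted Gaussian contribution and the $\phi$-weighted jump contribution recombine into exactly the numerator of $m(\xi)$, and that differential subordination holds simultaneously on both the continuous and the jump components. Once this representation is established, Burkholder's inequality applies as a black box and the sharpness is inherited from the one-dimensional Hilbert transform case.
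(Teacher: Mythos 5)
Your upper-bound argument follows the route the paper sketches (and that is carried out in detail in Ba\~nuelos--Bogdan and Ba\~nuelos--Bielaszewski--Bogdan): build a L\'evy process whose characteristic exponent is the denominator in \eqref{levymult}, take the parabolic extension, apply It\^o to split into Gaussian and compensated-jump martingale pieces, transform by $\psi$ on the continuous part and $\phi$ on the jump part, verify differential subordination from $\|\phi\|_\infty,\|\psi\|_\infty\leq 1$, identify $T_mf$ as the conditional expectation of the transformed martingale, and close with Burkholder--Wang's sharp $(p^*-1)$ inequality plus $L^p$-contractivity of conditional expectation and of the semigroup. That part is consistent with the paper.

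The sharpness argument contains a genuine gap. Every symbol of the form \eqref{defm} is \emph{even} in $\xi$: both $\langle\xi,\theta\rangle^2$ and $1-\cos\langle\xi,x\rangle$ are invariant under $\xi\mapsto -\xi$, so $m(-\xi)=m(\xi)$ identically. The Hilbert transform has the odd symbol $-i\,\operatorname{sgn}(\xi)$, and therefore \emph{cannot} be written in the form \eqref{defm} for any choice of $\phi,\psi,\mu,\nu$; the same parity obstruction rules out the first-order Riesz transforms. Moreover, even setting that aside, Pichorides' theorem gives $\|H\|_{L^p\to L^p}=\cot\left(\pi/(2p^*)\right)$, which is strictly less than $p^*-1$ for $p\neq 2$, so the Hilbert transform would not certify optimality of $(p^*-1)$ even if it were admissible. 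The paper's sharpness instead comes from the observation that $\Re B=R_2^2-R_1^2$ and $\Im B=-2R_1R_2$ (second-order Riesz combinations, which are even in $\xi$) lie in the class \eqref{defm}, combined with the Geiss--Montgomery-Smith--Saksman result that $\|\Re B\|_{L^p\to L^p}=\|\Im B\|_{L^p\to L^p}=p^*-1$. You should replace your Hilbert-transform argument by this one.
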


The class of {\it L\'evy multipliers} contains many interesting examples, including second order Riesz transforms and various Marcinkiewicz-type multipliers. When $d=2$, they include the real part $$\Re{B}=R_2^2-R_1^2$$ and imaginary part $$\Im{B}=-2R_2R_1$$ of the Beurling-Ahlfors operator $B$, where $R_1$ and $R_2$ are the Riesz transforms on $\R^2$. The fact that the  constant $(p^*-1)$ cannot be replaced by a smaller one follows from a result of  Geiss, Montgomery-Smith and Saksman \cite{GMS} (see also \cite{BO}) concerning the $L^p$ norms of $\Re{B}$ and $\Im{B}$, once one observes that these symbols belong to the class given by \eqref{defm} (see \cite[Example 4.2.1]{Ban1}).  For related logarithmic estimates  that can be obtained with the use of martingale methods, we refer the reader to Os\c ekowski \cite{O3} and \cite{O4}.  The main result of this paper is the following.

\begin{theorem}\label{mainthm} Let $m=m_{\phi,\psi,\mu,\nu}$ be a symbol given by \eqref{defm} and consider the Fourier multiplier $T_m$.  
If $D$, $E$ are measurable subsets of $\R^d$ with $E\subseteq D$ and $|D|<\infty$, then
\begin{equation}\label{mainin}
 \int_{D\setminus E} |T_{m}\chi_E(x)|\mbox{d}x\leq \begin{cases}
 |E|+|E|\ln\left(\frac{|D|}{2|E|}\right), & \mbox{if }|E|<|D|/2,\\
 |D\setminus E|+\frac{1}{2}|D \setminus E|\ln \left(\frac{|E|}{|D\setminus E|}\right), & \mbox{if }|E|\geq |D|/2.
\end{cases}
\end{equation}
\end{theorem}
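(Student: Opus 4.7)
I would follow the probabilistic strategy advertised throughout the introduction: reduce the claim to a sharp inequality for differentially subordinate martingales and then transfer via the Ba\~nuelos--Bogdan / Ba\~nuelos--Bielaszewski--Bogdan representation of $T_m$ as a projection of a martingale transform (see \cite{BB}, \cite{BBB}, \cite{AppBan}). The starting point is the duality identity
\begin{equation*}
\int_{D\setminus E}|T_m\chi_E(x)|\,\mathrm{d}x=\sup\Bigl\{\,\mathrm{Re}\!\int_{\mathbb{R}^d} \overline{g(x)}\,T_m\chi_E(x)\,\mathrm{d}x\,:\,\|g\|_\infty\leq 1,\ \mathrm{supp}\,g\subseteq D\setminus E\,\Bigr\},
\end{equation*}
so it suffices to estimate this pairing uniformly in admissible $g$.

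\textbf{Probabilistic reduction.} For the L\'evy process $Z$ whose characteristic exponent is the denominator of $m$, started from a sufficiently spread-out initial measure and run for a long time $T$, the pairing above is approximated by $\mathbb{E}[\overline{g(Z_T)}\,Y_T]$, where $Y$ is the martingale transform of $X_t:=\mathbb{E}[\chi_E(Z_T)\mid\mathcal{F}_t]$ produced by $\phi,\psi,\mu,\nu$; in particular $Y$ is differentially subordinate to $X$, and $X$ takes values in $[0,1]$ because $\chi_E$ does. The problem reduces to proving a sharp martingale bound of the form
\begin{equation*}
\mathbb{E}\bigl[|Y_\infty|\,\mathbf{1}_A\bigr]\leq V\bigl(\mathbb{P}(X_\infty=1),\mathbb{P}(A)\bigr),
\end{equation*}
for every such $X,Y$ and every event $A\subseteq\{X_\infty=0\}$; the event $\{X_\infty=1\}$ captures $\{Z_T\in E\}$ while $A$ captures $\{Z_T\in D\setminus E\}$. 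Setting $p=|E|/|D|$ and $q=|D\setminus E|/|D|$, after normalising by $|D|$ the bound on $V$ should match exactly the two branches of \eqref{mainin}, i.e. $V(p,q)=p+p\ln\!\bigl(\tfrac{1}{2p}\bigr)$ when $p<\tfrac12$ (and the analogous expression when $p\geq\tfrac12$).

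\textbf{Burkholder-style Bellman function.} To prove the martingale inequality I would construct a function $U\colon[0,1]\times\mathbb{R}\to\mathbb{R}$ with (i) the majorization $U(x,y)\geq |y|\,\mathbf{1}_{\{x=0\}}$ on the terminal values and (ii) the diagonal concavity $U(x+h,y+k)\leq U(x,y)+U_x(x,y)h+U_y(x,y)k$ whenever $|k|\leq|h|$, which (via It\^o's formula, or a telescoping computation for simple martingales) ensures that $U(X_t,Y_t)$ is a supermartingale for every pair with $Y$ differentially subordinate to $X$. The bound then follows from $\mathbb{E}\,U(X_\infty,Y_\infty)\leq U(x_0,0)$. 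The explicit form of $U$ is dictated by the extremal pairs (biased walks hitting $\{0,1\}$) that saturate \eqref{mainin}; it is piecewise-defined, linear inside a wedge around the $y$-axis and logarithmic outside, with the free boundary between the two regimes producing precisely the case split at $|E|=|D|/2$. The principal obstacle is guessing $U$ correctly and verifying condition (ii) across that free boundary; past work on similar logarithmic martingale estimates (cf.\ \cite{O3}, \cite{O4}) indicates that this is technical but tractable, and sharpness is then witnessed by the two-point extremal pairs from which $U$ was derived.
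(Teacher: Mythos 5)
Your overall skeleton is right — represent $T_m$ probabilistically via the L\'evy-process machinery of \cite{BB}, \cite{BBB}, reduce to a Burkholder-type inequality for a pair $(X,Y)$ with $X$ valued in $[0,1]$ and $Y$ differentially subordinate, and prove the martingale inequality by Bellman-function methods. But there is a genuine gap in how you set up the martingale estimate and how you plan to transfer it, and it is not a minor technicality.

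Your proposed majorization $U(x,y)\ge |y|\mathbf{1}_{\{x=0\}}$, combined with the supermartingale property of $U(X_t,Y_t)$, yields a bound of the shape $\E\bigl[|Y_\infty|\mathbf{1}_{\{X_\infty=0\}}\bigr]\le U(X_0,0)$. This depends \emph{only} on $X_0=\E X_\infty$, i.e.\ on the single parameter $|E|/|D|$; it cannot see the sub-event $A\subset\{X_\infty=0\}$ (equivalently, the quantity $|D\setminus E|$) that you wrote into your target bound $V(\mathbb{P}(X_\infty=1),\mathbb{P}(A))$. Yet the second branch of \eqref{mainin} genuinely degenerates as $|D\setminus E|\to 0$, so a two-parameter dependence is essential. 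A two-variable Bellman function $U(x,y)$ simply does not carry enough state to encode an arbitrary restriction to $A$; you would need either an extra variable tracking $\mathbb{P}(A\mid\mathcal F_t)$ or a different device. The transfer step compounds the problem: in the L\'evy representation one applies the martingale bound \emph{pointwise in the spatial variable $x$ and then integrates over $\R^d$}. Integrating $\E\bigl[|Y^x_\infty|(1-X^x_\infty)\bigr]$ over all $x\in\R^d$ diverges, because $\int_{\R^d}\E(1-X^x_\infty)\,\mathrm{d}x=\infty$; you must first localize to $A^x=\{x+X_{s,0}\in D\}$, and then $\int_{\R^d}\E(1-X^x_\infty)\mathbf{1}_{A^x}\,\mathrm{d}x=|D\setminus E|<\infty$. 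Your ``sufficiently spread-out initial measure'' sidesteps this Fubini issue informally, but a nonlinear envelope $V$ cannot be pulled through a Lebesgue integral over $\R^d$ the way a linear functional can.

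The missing idea is the level-$\lambda$ linearization. The paper proves, for every $\lambda>0$, the inequality
\begin{equation*}
\sup_{t\ge 0}\E\bigl[(|Y_t|-\lambda)_+(1-X_t)\bigr]\le C(\lambda)\,\E X_0,
\qquad C(\lambda)=\begin{cases}\lambda-\ln(2\lambda),&\lambda\le 1/2,\\ \tfrac12 e^{1-2\lambda},&\lambda\ge 1/2,\end{cases}
\end{equation*}
via a $\lambda$-dependent Bellman function $U^\lambda$ satisfying the continuous majorization $U^\lambda(x,y)\ge(|y|-\lambda)_+(1-x)$. Adding $\lambda\,\E(1-X_t)\mathbf{1}_A$ to both sides and splitting $A$ along $\{|Y_t|\gtrless\lambda\}$ gives $\E|Y_t|(1-X_t)\mathbf{1}_A\le C(\lambda)\E X_0+\lambda\,\E(1-X_t)\mathbf{1}_A$, which is \emph{linear} in the two data $\E X_0$ and $\E(1-X_t)\mathbf{1}_A$. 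This linearity is exactly what makes Fubini work: integrating over $x$ yields $\int_{D\setminus E}|T_m\chi_E|\le C(\lambda)|E|+\lambda|D\setminus E|$, and minimizing over $\lambda$ at the very end (with $\lambda=|E|/|D|$ in one regime and $\lambda=\tfrac12+\tfrac12\ln\bigl(|E|/|D\setminus E|\bigr)$ in the other) produces the two branches of \eqref{mainin}. Without introducing $\lambda$, your single Bellman function gives neither the second parameter nor a transferable inequality, so the proof as sketched does not close.
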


From \eqref{mainin} and the elementary inequality 
$$
(1-a)+\frac{(1-a)}{2}\ln\left(\frac{1-a}{a}\right)\leq a+a\ln\left({\frac{1}{2a}}\right)=a\ln\left({\frac{e}{2a}}\right)
$$
valid for all $\frac{1}{2}\leq a\leq 1$, 
we obtain the following estimate  in the form of   \eqref{gr3}.  
\begin{corr}\label{maincor} Let $m$, $T_m$, $D$ and $E$ be as in Theorem \ref{mainthm}. Then 
\begin{equation}\label{mainin1}
\int_{D\setminus E} |T_{m}\chi_E(x)|\mbox{d}x\leq |E|\ln\left(\frac{e|D|}{2|E|}\right). 
\end{equation}
\end{corr}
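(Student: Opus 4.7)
The plan is simply to split on which branch of \eqref{mainin} applies and rewrite each to match the unified bound \eqref{mainin1}. In the easier regime $|E|<|D|/2$, I would just factor the right-hand side of \eqref{mainin}:
$$|E|+|E|\ln\frac{|D|}{2|E|}=|E|\Bigl(1+\ln\frac{|D|}{2|E|}\Bigr)=|E|\ln\frac{e|D|}{2|E|},$$
which is exactly \eqref{mainin1}, so nothing further is required in this case.

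In the remaining regime $|E|\geq |D|/2$, I would normalize by setting $a=|E|/|D|\in[1/2,1]$ and divide the desired inequality \eqref{mainin1} through by $|D|$. The claim then reduces to showing
$$(1-a)+\tfrac{1}{2}(1-a)\ln\frac{a}{1-a}\leq a\ln\frac{e}{2a},$$
which is precisely the elementary numerical inequality displayed just before the corollary's statement. Invoking that inequality finishes the second case and completes the proof.

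There is essentially no obstacle: Theorem \ref{mainthm} does all the analytic work, and the corollary is a uniform rewriting of its two-branch bound. If one wished to verify the elementary inequality independently, I would set
$$h(a)=a\ln\frac{e}{2a}-(1-a)-\tfrac{1}{2}(1-a)\ln\frac{a}{1-a},$$
note the endpoint values $h(1/2)=0$ and $h(1)=1-\ln 2>0$, and check by computing $h'$ and examining its sign on $[1/2,1]$ that $h$ remains nonnegative throughout---a routine one-variable calculus exercise.
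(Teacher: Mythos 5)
Your proof is correct and matches the paper's approach exactly: the first case is a direct algebraic rewriting, and the second reduces, after normalizing by $|D|$ with $a=|E|/|D|$, to the elementary one-variable inequality the paper displays just before the corollary. One small remark: the paper's displayed inequality has $\ln\bigl(\frac{1-a}{a}\bigr)$ on its left side, which appears to be a typo for $\ln\bigl(\frac{a}{1-a}\bigr)$ --- the version you correctly wrote down is the one that Theorem \ref{mainthm} actually produces upon normalization.
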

It is important to note here, however, that if $E$ is large relative to $D$, that is if $|D\setminus E|$ is small, then the left hand side of \eqref{mainin}  is small and this is also reflected on the right hand side of that inequality.  On the other hand, the inequality \eqref{mainin1} alone does not capture this  feature.  Thus the probabilistic techniques yielding  \eqref{mainin} are rather fine-tuned and give behavior for our multipliers which follows that of the sharp inequality \eqref{gr3} for the Beurling-Ahlfors operator. That is, with the sharp constant $\pi$,  the right hand side of inequality \eqref{gr3} goes to zero as $|E|$ goes to either zero or $\pi$ which is the behavior of its left hand side. This behavior is also captured in the case of inequality \eqref{mainin}.   

As we have already mentioned above, the real and imaginary parts of the Beurling-Ahlfors operator belong to the class \eqref{defm}. Therefore, we obtain the following inequality for any measurable subset $E$ of the unit disc $\bD\subset \mathbb{C}$: 
\begin{equation*}
 \int_{\bD\setminus E}|\Re{B}(\chi_E)(z)|\mbox{d}z\leq 
\begin{cases}
\displaystyle |E|+|E|\ln\left(\frac{\pi}{2|E|}\right), & \mbox{if }|E|<\pi/2,\\
\displaystyle (\pi-|E|)+(\pi-|E|)\ln\left(\frac{|E|}{2(\pi-|E|)}\right), & \mbox{if }|E|\geq \pi/2,
\end{cases}
\end{equation*}
and  similarly with $\Re{B}$ replaced by $\Im{B}$ on the left hand side. 
In particular, 
\begin{equation}\label{real}
\int_{\bD\setminus E}|\Re{B}(\chi_E)(z)|\mbox{d}z\leq  |E|\ln\left(\frac{e\pi}{2|E|}\right)
\end{equation}
and 
\begin{equation}
 \int_{\bD\setminus E}|\Im{B}(\chi_E)(z)|\mbox{d}z \leq |E|\ln\left(\frac{e\pi}{2|E|}\right).
\end{equation} 
From this  we have the following inequality which is a non-sharp version of \eqref{gr3}.
\begin{corr}
For  any measurable subset $E$ of the unit disc $\bD\subset \mathbb{C}$ we have
\begin{equation*}
 \int_{\bD\setminus E}|B(\chi_E)(z)|\mbox{d}z\leq 
\begin{cases}
\displaystyle \sqrt{2}|E|+\sqrt{2}|E|\ln\left(\frac{\pi}{2|E|}\right), & \mbox{if }|E|<\pi/2,\\
\displaystyle \sqrt{2}(\pi-|E|)+\sqrt{2}(\pi-|E|)\ln\left(\frac{|E|}{2(\pi-|E|)}\right), & \mbox{if }|E|\geq \pi/2.
\end{cases}
\end{equation*}
In particular, 
\begin{equation}\label{sqrt2}
\int_{\bD\setminus E}|{B}(\chi_E)(z)|\mbox{d}z\leq  \sqrt{2}|E|\ln\left(\frac{e\pi}{2|E|}\right).   
\end{equation}
 \end{corr}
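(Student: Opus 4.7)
The plan is to deduce the corollary directly from the two displayed inequalities for $\int_{\bD\setminus E}|\Re B(\chi_E)|$ and $\int_{\bD\setminus E}|\Im B(\chi_E)|$ that appear just above it. Both of those are instances of Theorem~\ref{mainthm} (with $D=\bD$) applied to $\Re B$ and $\Im B$, which the excerpt has already identified as members of the L\'evy class \eqref{defm}. Let $K=K(|E|,\pi)$ denote the common right-hand side in \eqref{mainin} for $|D|=\pi$, so that both $\int_{\bD\setminus E}|\Re B\chi_E|$ and $\int_{\bD\setminus E}|\Im B\chi_E|$ are at most $K$.

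The key pointwise identity is $|B\chi_E(z)|^2=(\Re B\chi_E(z))^2+(\Im B\chi_E(z))^2$, which with the elementary bound $\sqrt{a^2+b^2}\le\sqrt 2\max(|a|,|b|)$ gives
\[
|B\chi_E(z)|\le\sqrt 2\,\max\bigl(|\Re B\chi_E(z)|,|\Im B\chi_E(z)|\bigr).
\]
I would then partition $\bD\setminus E=A\sqcup A^c$, with $A=\{z:|\Re B\chi_E(z)|\ge|\Im B\chi_E(z)|\}$, so that
\[
\int_{\bD\setminus E}|B\chi_E|\,dz \;\le\; \sqrt 2\Bigl(\int_{A}|\Re B\chi_E|\,dz+\int_{A^c}|\Im B\chi_E|\,dz\Bigr).
\]
To estimate each piece I would apply Theorem~\ref{mainthm} separately: to $\Re B$ on the domain $D_1:=E\cup A$ (noting $E\subseteq D_1$ and $|D_1|=|E|+|A|<\infty$), and to $\Im B$ on $D_2:=E\cup A^c$, using that $|D_1|+|D_2|=\pi+|E|$. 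Each application produces one of the two branches of \eqref{mainin} according to whether $|E|<|D_i|/2$. One then checks that the sum of the two resulting bounds is controlled by $K(|E|,\pi)$, so the total is $\sqrt 2\,K$.

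The main obstacle is precisely this final sum-and-optimize step: depending on the split between $|A|$ and $|A^c|$, the two applications of Theorem~\ref{mainthm} can land in different branches of the piecewise estimate \eqref{mainin}, so one has to go through the (at most) four possible combinations and verify in each case that the sum is dominated by the single value on the right-hand side of \eqref{mainin} with $|D|=\pi$. A naive triangle inequality $|z|\le|\Re z|+|\Im z|$ would yield only a factor of $2$, so this case analysis is where the sharper constant $\sqrt 2$ must be extracted.

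Once the piecewise estimate is in hand, the ``in particular'' inequality \eqref{sqrt2} is immediate: in the case $|E|<\pi/2$ it is literally the piecewise bound after combining $|E|+|E|\ln(\pi/(2|E|))=|E|\ln(e\pi/(2|E|))$, and in the case $|E|\ge\pi/2$ it follows from the elementary inequality $(1-a)+\tfrac12(1-a)\ln((1-a)/a)\le a+a\ln(1/(2a))$ quoted in the excerpt before Corollary~\ref{maincor}, applied with $a=|E|/\pi$.
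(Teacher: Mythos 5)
Your overall plan---split $\bD\setminus E$ into $A=\{|\Re B\chi_E|\ge|\Im B\chi_E|\}$ and its complement, use $|B\chi_E|\le\sqrt 2\max(|\Re B\chi_E|,|\Im B\chi_E|)$, and then invoke Theorem~\ref{mainthm} once for $\Re B$ on $D_1=E\cup A$ and once for $\Im B$ on $D_2=E\cup A^c$---is a reasonable attempt, and the paper itself gives no explicit argument for this corollary (it is stated as a direct consequence of the displayed bounds for $\Re B$ and $\Im B$). However, the step you flag as ``the main obstacle'' is in fact a genuine gap, not merely a case check: the sum of the two right-hand sides of \eqref{mainin} is \emph{not} controlled by $K(|E|,\pi)$. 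Writing $B(a,d)$ for the right-hand side of \eqref{mainin} with $|E|=a$, $|D|=d$, take $a$ small and $|D_1|=|D_2|=(a+\pi)/2$, which is an admissible split with $|D_1|+|D_2|=a+\pi$. Then both applications land in the first branch and
\begin{equation*}
B(a,|D_1|)+B(a,|D_2|)=2a+2a\ln\!\left(\frac{a+\pi}{4a}\right)\sim 2a\ln\!\left(\frac{\pi}{2a}\right)\quad\text{as }a\to 0,
\end{equation*}
which is asymptotically \emph{twice} $K(a,\pi)=a+a\ln(\pi/(2a))\sim a\ln(\pi/(2a))$. So the inequality $B(a,|D_1|)+B(a,|D_2|)\le K(a,\pi)$ you need can fail by a factor close to $2$. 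The only unconditional statement available is that $B(a,\cdot)$ is increasing, so each summand is at most $K(a,\pi)$; but that only gives $\sqrt 2\cdot 2K$, which is worse than the trivial $2K$ obtained from the plain triangle inequality $|B\chi_E|\le|\Re B\chi_E|+|\Im B\chi_E|$.

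The source of the trouble is structural: the left-hand side of \eqref{mainin} is additive over disjoint pieces of $D\setminus E$, but the right-hand side is an upper bound that is not subadditive in the corresponding sense --- cutting the domain into two halves does not cut the bound in half (for small $|E|$ it barely decreases it, because of the logarithm). So the pointwise inequality $|w|\le\sqrt 2\max(|\Re w|,|\Im w|)$ does not propagate to the integrals in the way you intend. To reach the claimed $\sqrt 2$ (rather than $2$) you would need an argument that uses more than the two scalar bounds for $\Re B$ and $\Im B$ separately; for example, a genuinely $\R^2$-valued version of the martingale inequality applied to the pair $(\Re B\chi_E,\Im B\chi_E)$ arising from a \emph{single} underlying L\'evy martingale, rather than two separate scalar applications on sub-domains. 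As written, your proposed final ``sum-and-optimize'' step does not close.
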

 
 Given the sharpness of the inequality \eqref{gr3} for the Beurling-Ahlfors operator and the sharpness of the martingale estimates used to obtain our  results, it is reasonable to conjecture that our inequalities are also sharp and that one should be able to show this by looking at the real or imaginary part of the Beurling-Ahlfors operator. This observation turned out to be successful in the study of \eqref{Lp} and, as we will see, this is the state of affairs  also in our setting. How can we construct appropriate extremal examples for $\Re{B}$ and $\Im{B}$? The first idea that comes to mind is to inspect carefully  the optimizers of \eqref{gr3}. Namely, for \eqref{gr3} one derives that
\begin{equation}\label{B-disc}
B(\chi_E)(z)=\frac{r^2}{z^2}\chi_{\bC\setminus E}(z),
\end{equation}
 where $E=\bD(0, r)$ is a disc centered at $0$ and radius $r$,
 and then explicitly computes the left hand side of \eqref{gr3} (see \cite[p.~386]{AstIwaMar}) and this yields equality. However, these calculations do not give the sharpness for our inequalities. Though the formula for $B(\chi_E)$ does yield the formula for $\Re{B}(\chi_E)$ and we can then compute the left hand side of \eqref{real}, we do not obtain the required equality at the end. Therefore, another approach is needed. We will make use of a much more complicated argument, which exploits the theory of laminates. It will allow us to establish the following result.

 \begin{theorem}\label{sharpness} For any   $\eta<1$ there are subsets $E\subset D\subset \bD$  such that 
\begin{equation}\label{sharp}
 \int_{D\setminus E} |(\Re{B})\chi_E(x)|\mbox{d}x\geq \eta\left[|E|\ln\left(\frac{e|D|}{2|E|}\right)\right].
 \end{equation}
 A similar statement holds for  $\Re{B}$ replaced by  $\Im{B}$.  
 \end{theorem}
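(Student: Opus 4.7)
The plan is to follow the laminate-based strategy advertised in the paper, transferring sharpness from an underlying Burkholder-type martingale inequality back to $\Re B$ itself, in the spirit of Astala--Iwaniec--Prause--Saksman and the earlier Boros--Sz\'ekelyhidi--Volberg construction. The idea is that the bound \eqref{mainin} is the projection of a sharp estimate for differentially subordinate martingales, so I would work backwards: first exhibit a near-extremal martingale pair, then lift it to a laminate on $2\times 2$ real matrices, then realize the laminate as the gradient distribution of a compactly supported Lipschitz map $f\colon\bC\to\bC$, and finally read off $E\subset D$ from the support of $\bar\partial f$.

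First I would identify a near-extremal martingale for the Bellman-type inequality driving Theorem \ref{mainthm}: a pair $(X,Y)$ where $X$ is a martingale terminating in $\{0,1\}$, $Y$ is differentially subordinate to $X$ with $Y_0=0$, and $\E|Y_\infty|\chi_{\{X_\infty=0\}}$ saturates the right-hand side of \eqref{mainin} with $|E|=\Pro(X_\infty=1)\cdot|D|$. For each $\eta<1$ I expect to produce a finite binary-tree Markov chain of depth $N$, modelled on the standard Brownian/Burkholder extremizer, that approaches equality up to a factor $\eta$; the depth $N$ controls the logarithmic factor $\ln(|D|/(2|E|))$, and can be chosen freely.

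Next I would encode this martingale as a laminate $\nu_N$ of finite order on $\R^{2\times 2}\cong\bC\oplus\bC$, using the conformal/anti-conformal splitting $A\mapsto(\partial A,\bar\partial A)$. Each $\pm 1$ differential-subordination step corresponds naturally to a rank-one split in matrix space (preserving the $\bar\partial$-coordinate of the conditional barycenter), placing $\nu_N$ within the class of laminates of finite order. The Astala--Iwaniec--Prause--Saksman realization theorem (or the earlier Boros--Sz\'ekelyhidi--Volberg version) then produces Lipschitz maps $f_N\colon\bC\to\bC$, compactly supported in a fixed disc $D$, with $Df_N$ on $D$ arbitrarily close to $\nu_N$ in the weak-$*$ sense.

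Finally, setting $E_N:=\{z\in D:\bar\partial f_N(z)\neq 0\}$, the construction forces $\bar\partial f_N$ to take essentially only the values $0$ and $1$, so that $\bar\partial f_N\approx\chi_{E_N}$. The intertwining identity \eqref{intert} gives $B(\chi_{E_N})\approx\partial f_N$, and taking real parts and integrating on $D\setminus E_N$ yields
$$
\int_{D\setminus E_N}|(\Re B)(\chi_{E_N})(x)|\,\mathrm{d}x\;\ge\;\eta\,|E_N|\ln\!\left(\tfrac{e|D|}{2|E_N|}\right)
$$
from the near-extremality of $(X^{(N)},Y^{(N)})$. The $\Im B$ case follows after rotating the laminate by $\pi/4$. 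The hard part will be engineering the laminate so that $\bar\partial f_N$ is approximately two-valued while remaining rank-one compatible at every split \emph{and} realizing the sharp logarithmic constant; this is precisely why the explicit formula \eqref{B-disc}, which works for $B$, fails for $\Re B$, and why only a resolution of the imbalance between $|\Re Bg|$ and $|Bg|$ scale-by-scale through the martingale can succeed.
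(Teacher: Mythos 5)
Your high-level strategy — near-extremal martingales $\to$ laminate $\to$ convex-integration realization $\to$ extremal $E\subset D$ — is exactly the paper's plan. But you have chosen a different, and ultimately more troublesome, matrix framework, and you have left unresolved the step that is the actual crux of the argument.

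You propose to realize a laminate on all of $\R^{2\times 2}\cong\bC\oplus\bC$ via the conformal/anti-conformal splitting $A\mapsto(\partial A,\bar\partial A)$, produce Lipschitz maps $f_N:\bC\to\bC$, set $E_N=\{\bar\partial f_N\neq 0\}$, invoke $B\bar\partial=\partial$, and then ``take real parts.'' The gap is in this last gesture: the rank-one constraint for full matrices reads $|\partial A|=|\bar\partial A|$, which couples the \emph{modulus} of the conformal part to that of the anti-conformal part, whereas the theorem requires a lower bound on $\int|\Re\partial f_N|$ and simultaneously forces $\bar\partial f_N$ to be two-valued $\{0,1\}$. You would need to engineer a laminate whose support keeps $\bar\partial$ on $\{0,1\}$, keeps the rank-one jumps compatible, and also concentrates $\Re\partial f_N$ at the right logarithmic scale — and you explicitly flag this as ``the hard part'' without supplying it. This is precisely the difficulty the paper sidesteps by working instead with \emph{symmetric} matrices and a \emph{scalar} potential $u_j\in C_0^\infty(\bD)$. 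There the laminate is supported on diagonal matrices $\diag(x,y)$, the rank-one condition is simply $xy=0$ (one coordinate frozen per step), and this is exactly the ``zigzag'' property of the centered pair $(F,G)=\big(\tfrac{f+g-\kappa}{2},\tfrac{f-g-\kappa}{2}\big)$ built from the extremal martingale of Section 2. The two laminate coordinates then become $x+y\sim\Delta u_j$ (hence $\sim\chi_E$ after shifting by $\kappa$) and $x-y\sim\partial_{11}u_j-\partial_{22}u_j=(\Re B)\Delta u_j$, so the quantity $(\Re B)\chi_{E^j}$ is read off directly without ever appealing to the complex intertwining $B\bar\partial=\partial$ or to a projection onto real parts. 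The Boros--Sz\'ekelyhidi--Volberg realization (Corollary 4.7 in the paper) is stated exactly in this symmetric-matrix/second-derivative form, which is why the paper uses it rather than the Astala--Faraco--Sz\'ekelyhidi full-matrix version you invoke.

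Two further points you should repair. First, the paper's Corollary 4.7 produces $u_j$ compactly supported in $\bD$, and the argument crucially needs $B\chi_\bD=0$ on $\bD$ (not the intertwining identity) to replace $\Re B\Delta u_j$ by $\Re B v_j$ with $v_j=\Delta u_j+\kappa\chi_\bD$; you do not mention this step, and without it the passage from a compactly supported potential to the indicator of $E$ breaks. Second, the set $D$ in the conclusion is not the disc $\bD$: after the sharpness inequality is obtained in the form $\int_\bD(|\Re B\chi_{E^j}|-\lambda)_+(1-\chi_{E^j})\gtrsim M|E^j|$, one must define $D^j:=\{z\in\bD:|\Re B\chi_{E^j}(z)|\ge\lambda\}\cup E^j$ and then optimize over $\lambda$; taking $D=\bD$ would contradict the Eremenko--Hamilton inequality, as the paper stresses. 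Your sketch leaves $D$ unspecified, which hides a real subtlety.
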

 
Given the  inequalities \eqref{gr3}, \eqref{mainin} and \eqref{sharp}, several comments are in order. 
First, we observe  that  inequality \eqref{mainin} is the farthest point where differentially subordinate martingales can take us in the direction of \eqref{gr3}, when this is viewed purely as a two dimensional  case of more general \emph{real} harmonic analysis results on multipliers in $\bR^d$, $d\geq 2$.  This point of view completely ignores  the complex structure of $B$ on $\bC$;  for example, it ignores its  important property \eqref{intert}.  The results in Ba\~nuelos and Janakiraman \cite{BJ}, which have produced the best general bounds thus far on Iwaniec's conjecture, are an attempt to use, in a probabilistic  way, the complex structure of the operator $B$ without ``decoupling" it into its real and imaginary parts. The idea in \cite{BJ} is to take advantage of the ``conformal" structure, first noticed in \cite{BW},  of the martingales that arise from $B$  as well as the use of subordination.  (For more on several results on conformal martingales  motivated by the complex structure of the operator $B$, see \cite{BJ}, \cite{BO1}, \cite{BJV1} and \cite{Jan}.) A similar ``fine-tuning'' with the use of conformal martingales may also be successful for \eqref{sqrt2}. We believe the ideas in \cite{BJ} and the techniques in the current paper will likely lead to a better estimate than the $\sqrt{2}$ 
given in \eqref{sqrt2} but the construction of the necessary ``Burkholder" function is not at all clear at this point.

The second important remark we wish to make concerns the appearance of the three sets $D$, $\bD$ and $E$ in the formulation of Theorem \ref{sharpness}. The use of three sets is necessary: one cannot take $D=\bD$, since this would then clearly contradict \eqref{gr3}. This reveals a very interesting phenomenon: using martingale methods, we have proved that the geometry of the set $\bD$ plays a crucial role in the sharpness of \eqref{gr3} in the sense  that $\bD$ cannot be replaced there by an arbitrary subset of $\bC$ of finite measure. On the other hand, this should be confronted with the well-known fact that there is some ambiguity in the choice of $\bD$. Namely, as shown by Eremenko and Hamilton in \cite{EH}, \eqref{gr3} still holds true if this set is an arbitrary compact set of transfinite diameter $1$.

This paper is organized as follows.  \S2 contains the main probabilistic results: we prove an appropriate stochastic version of \eqref{mainin} there. In \S3 we show how to deduce the result for our L\'evy  multipliers from the martingale inequalities. Theorem \ref{sharpness} is proved in \S4. 

\section{martingale inequalities}
Assume that $(\Omega,\mathcal{F},\mathbb{P})$ is a complete probability space, equipped with $(\F_t)_{t\geq 0}$, a nondecreasing family of sub-$\sigma$-fields of $\F$, such that $\F_0$ contains all the events of probability $0$. Let $X$, $Y$ be two adapted c\'adl\'ag martingales, i.e., with right-continuous trajectories that have limits from the left. We assume further that $X$ takes values in the interval $[0,1]$, while $Y$ is $\mathcal{H}$-valued; here $\mathcal{H}$ denotes the separable Hilbert space, which may and will be assumed to be equal to $\ell_2$. The symbols $[X,X]$ and $[Y,Y]$ stand for the square brackets of $X$ and $Y$, respectively; see e.g. Dellacherie and Meyer \cite{DM} for the definition in the real-valued case, and extend the notion to the vector setting by $[Y,Y]=\sum_{k=1}^\infty [Y^k,Y^k]$, where $Y^k$ is the $k$-th coordinate of $Y$. Following Ba\~nuelos and Wang \cite{BW} and Wang \cite{W}, we say that $Y$ is \emph{differentially subordinate} to $X$, if the process $([X,X]_t-[Y,Y]_t)_{t\geq 0}$ is nonnegative and nondecreasing as a function of $t$. For example, let $f=(f_n)_{n\geq 0}$, $g=(g_n)_{n\geq 0}$ be a pair of adapted discrete-time martingales and let us treat them as continuous-time processes (via $X_t=f_{\lfloor t\rfloor}$, $Y_t=g_{\lfloor t\rfloor}$, $t\geq 0$). Then the above domination amounts to saying that $|dg_n|\leq |df_n|$ for all $n$, which is the original definition of differential subordination, due to Burkholder \cite{B0}. Here $(df_n)_{n\geq 0}$, $(dg_n)_{n\geq 0}$ stand for the difference sequences of $f$ and $g$, given by $df_0=f_0$, $df_n=f_n-f_{n-1}$ ($n\geq 1$), and similarly for $dg$.

We turn to the main result of this section. Let
\begin{equation}\label{defC}
 C(\lambda)=\begin{cases}
\lambda-\ln(2\lambda) & \mbox{if }0<\lambda\leq 1/2,\\
\frac{1}{2}e^{1-2\lambda} & \mbox{if }\lambda\geq 1/2.
\end{cases}
\end{equation}

\begin{theorem}\label{mainth}
Assume that $X$, $Y$ are martingales taking values in $[0,1]$ and $\mathcal{H}$, respectively. If $Y$ is differentially subordinate to $X$ and satisfies $Y_0=0$, then for any $\lambda> 0$ we have
\begin{equation}\label{martin}
\sup_{t\geq 0}\E \Big[(|Y_t|-\lambda)_+(1-X_t)\Big]\leq C(\lambda)\sup_{t\geq 0}\E [X_t].
\end{equation}
The inequality is sharp, even in the discrete-time setting: for any $\e>0$ there is a martingale $f$ taking values in $[0,1]$ and a real-valued martingale $g$ which is differentially subordinate to $f$, satisfying $g_0=0$ and 
\begin{equation}\label{sharp0}
\sup_{n\geq 0}\E \Big[(|g_n|-\lambda)_+(1-f_n)\Big]>\left(C(\lambda)-\e\right)\E f_0.
\end{equation}
\end{theorem}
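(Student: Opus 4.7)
The plan is to establish inequality \eqref{martin} by Burkholder's method of constructing a special function. Concretely, I would seek a function $U\colon [0,1]\times \mathcal{H}\to\R$ satisfying three properties: majorization
\[
U(x,y)\geq (|y|-\lambda)_+(1-x)-C(\lambda)x,
\]
an initial bound $U(x,0)\leq 0$ for all $x\in[0,1]$, and a ``concavity'' property guaranteeing that $(U(X_t,Y_t))_{t\geq 0}$ is a supermartingale whenever $X\in[0,1]$ is a martingale and $Y$ is differentially subordinate to $X$. Once such $U$ is in hand, the argument runs as follows: since $X$ is a martingale, $\sup_{t\geq 0}\E X_t=\E X_0$, and
\[
\E\bigl[(|Y_t|-\lambda)_+(1-X_t)\bigr]-C(\lambda)\E X_0=\E V(X_t,Y_t)\leq \E U(X_t,Y_t)\leq \E U(X_0,0)\leq 0,
\]
with $V(x,y):=(|y|-\lambda)_+(1-x)-C(\lambda)x$. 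Rearranging and taking the supremum over $t$ yields \eqref{martin}.

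The supermartingale property reduces, by It\^o's formula in the continuous case and by a direct step-by-step comparison in the c\`adl\`ag setting (in the spirit of Wang~\cite{W}), to the pointwise estimate
\[
U(x+h,y+k)\leq U(x,y)+U_x(x,y)\,h+\langle U_y(x,y),k\rangle
\]
for all $h\in\R$, $k\in\mathcal{H}$ with $|k|\leq|h|$ and $x+h\in[0,1]$. Since $V$ is radial in $y$, it is natural to look for $U$ of the form $U(x,y)=u(x,|y|)$, and in the radial case the condition above reduces to a concavity-type inequality in $(x,r)$ together with a sign condition on $u_r/r$. The main obstacle is the explicit construction of $u$: the two-regime behavior of $C(\lambda)$ strongly suggests a piecewise definition with a logarithmic regime (reflecting $\lambda-\ln(2\lambda)$ for $\lambda\leq 1/2$) and an exponential regime (reflecting $\tfrac12 e^{1-2\lambda}$ for $\lambda\geq 1/2$), glued along a transition curve in $[0,1]\times[0,\infty)$. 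I would try an ansatz of the form $u(x,r)=(1-x)\phi(r)+\psi(x,r)$, with $u$ coinciding with $V$ on a region where $|y|$ is large (so that equality is forced by sharpness), and extending inward by solving the degenerate PDE coming from the saturated subordination condition $(u_{xx}+2u_{xr}+u_{rr}\leq 0$ with appropriate radial correction). The three defining properties of $U$ must then be checked by elementary, if somewhat lengthy, calculus; the matching of the two analytic formulas and the verification of majorization near the crossover at $\lambda=1/2$ will be the most delicate step.

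For the sharpness claim \eqref{sharp0}, I would construct explicit discrete-time examples. The strategy is standard in this circle of ideas: reading the extremal behavior off the special function $U$, one builds a ``zigzag'' martingale $(f_n,g_n)$ whose trajectories follow the curves along which $U=V$ inside $[0,1]\times\mathcal{H}$, with $f_n\in[0,1]$ absorbed at $0$ and $1$ and $g_n$ moving by $\pm|df_n|$ at each step to realize the tight differential subordination $|dg_n|=|df_n|$. A convenient starting distribution is $f_0=x_0\in(0,1)$, $g_0=0$, and then one lets the process run until $f_n$ is absorbed. Optimizing over $x_0$ (or, equivalently, over the free parameter in the construction) and passing to an appropriate limit recovers the bound $(C(\lambda)-\e)\E f_0$, completing the proof.
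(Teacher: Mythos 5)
Your proposal correctly identifies the method --- Burkholder's special-function technique, with a piecewise-defined $U$, is exactly what the paper uses, and your intuition about the two regimes (logarithmic for $\lambda\le 1/2$, exponential for $\lambda\ge 1/2$) matches the paper's construction. Likewise, the zigzag discrete-time martingale idea for sharpness is precisely the paper's strategy. However, what you have written is an outline, not a proof, and the gap is not a small one: the entire technical content of this theorem \emph{is} the explicit construction of $U^\lambda$ and the verification of its properties. You acknowledge this yourself (``the main obstacle is the explicit construction of $u$''), so the issue is not a misconception but incompleteness.

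To be concrete about what is missing: for $\lambda<1/2$ the paper takes
\[
U^\lambda(x,y)=\begin{cases}
(4\lambda)^{-1}(|y|^2-x^2)+(\lambda-\ln(2\lambda))x & x+|y|<2\lambda,\\
|y|-\lambda+\lambda x-x\ln(x+|y|) & 2\lambda<x+|y|<1,\\
(1-x)(x+|y|-\lambda) & x+|y|>1,
\end{cases}
\]
and for $\lambda\ge 1/2$ a four-piece function on the regions $|y|\le\min\{x,1-x\}$, the two side triangles, and the top region, with the exponential formulas $\tfrac12 x\exp(2|y|-2x-2\lambda+1)$ and $\tfrac12(1-x)\exp(2|y|+2x-2\lambda-1)$ on the side pieces. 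Your ansatz $u(x,r)=(1-x)\phi(r)+\psi(x,r)$ is not the right shape: the function is quadratic in $y$ in one region, logarithmic in $x+|y|$ in another, and the variables do not separate additively. Without pinning down these formulas one cannot verify the concavity inequality, the majorization $U^\lambda(x,y)\ge(|y|-\lambda)_+(1-x)$, or the boundary bound $U^\lambda(x,0)\le C(\lambda)x$ --- and these verifications, done region by region, constitute the proof. Similarly, the sharpness part requires the specific Markov chains the paper lays out (with state-by-state transitions indexed by a small $\kappa$ and a large $N$, and the explicit computation of hitting probabilities leading to the Riemann sum that converges to the integral $\int_\lambda^{1-\lambda}x(\lambda+x)^{-2}dx=-\ln(2\lambda)-\lambda+1$); ``follow the curves where $U=V$'' gestures at the right idea but does not produce the example or the estimate $(C(\lambda)-\e)\E f_0$.

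One further technical point: the paper does not use the supermartingale property in the bare form you state, but rather an approximation device (Lemma~2.2): it replaces $(X,Y)$ by $((1-r)X+r,(1-r)Y)$ to stay in the interior where $c$ is bounded, applies the It\^o/Wang comparison along localizing stopping times $\tau_n$, and then sends $n\to\infty$ and $r\downarrow 0$ via Fatou. If you try to argue directly that $U(X_t,Y_t)$ is a supermartingale, you will have to deal with the fact that $c^\lambda$ blows up as $x\to 0$ or $x\to 1$, which is exactly the issue the $r$-rescaling circumvents. Your outline glosses over this.
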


We observe that since the martingale $X$ takes values in $[0, 1]$ we in fact have that $X_t=E(X|\mathcal{F}_t)$ where $X$ is a random variable with values in $[0, 1]$ and hence $\sup_{t\geq 0}\E [X_t]=\|X\|_1=X_0$. 
Let $D=[0,1]\times \mathcal{H}$. The proof of the inequality \eqref{martin} will be based on Burkholder's method: we will deduce the validity of the estimate from the existence of a certain special function, satisfying appropriate majorization and concavity. See \cite{B1} or \cite{O0} for the detailed description of the technique. Actually, we will exploit the following statement, which is a slight modification of the results of Wang \cite{W} (see Proposition~2 there).

\begin{lemma}\label{mainlemma}
Let $U:D\to \R$ be a continuous function which is of class $C^1$ in the interior of $D$ and of class $C^2$ on $D_i$, where $D_1$, $D_2$, $\ldots,$ $D_m$ are open subsets of $D$ such that $\overline{D_1}\cup \overline{D_2}\cup \ldots \cup \overline{D_m}=D$. Assume in addition that there is a Borel function $c:D_1\cup D_2\cup \ldots \cup D_m\to [0,\infty)$ satisfying
\begin{equation}\label{assumption1}
 \sup_{(x,y)\in (D_1\cup D_2\cup \ldots \cup D_m)\cap ([r,1-r]\times \mathcal{H})}c(x,y)<\infty \qquad \mbox{for all }0<r<1/2
\end{equation}
and such that for all $(x,y)\in D_1\cup D_2\cup \ldots \cup D_m$ and all $h\in \R$, $k\in \mathcal{H}$,
\begin{equation}\label{assumption2}
 U_{xx}(x,y)h^2+2\langle U_{xy}(x,y)h,k\rangle+\langle U_{yy}(x,y)k,k\rangle\leq -c(x,y)(|h|^2-|k|^2).
\end{equation}
Let $X$ be a martingale taking values in $[0,1]$ and let $Y$ be a martingale taking values in $\mathcal{H}$. If $Y$ is differentially subordinate to $X$, then for any $0<r<1/2$ and any $t\geq 0$ there is a nondecreasing sequence $(\tau_n)_{n\geq 0}$ of stopping times converging to $\infty$ almost surely, such that
\begin{equation}\label{assertion}
 \E U((1-r)X_{\tau_n\wedge t}+r,(1-r)Y_{\tau_n\wedge t})\leq \E U((1-r)X_0+r,(1-r)Y_0).
\end{equation}
\end{lemma}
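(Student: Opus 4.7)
The plan is to apply Burkholder's method in the form developed by Wang~\cite{W}, adapted to the piecewise $C^2$ setting of the hypothesis. Set $M_s := (1-r)X_s + r$ and $N_s := (1-r)Y_s$ for $s \geq 0$. Then $M$ is a martingale taking values in $[r, 1] \subset (0, 1]$, bounded away from $x = 0$, and $N$ is $\mathcal{H}$-valued; since scaling and translation preserve quadratic variations up to the factor $(1-r)^2$, the process $[M,M]_s - [N,N]_s = (1-r)^2([X,X]_s - [Y,Y]_s)$ is still nonnegative and nondecreasing, so $N$ remains differentially subordinate to $M$.

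Next I would localize with the stopping times
\begin{equation*}
\tau_n := \inf\{s \geq 0 : M_s \geq 1 - 1/n \text{ or } |N_s| \geq n\},
\end{equation*}
which increase to $\infty$ almost surely since the trajectories of $M$ and $N$ are c\'adl\'ag and hence bounded on compact time intervals. On $[0, \tau_n \wedge t]$, the process $(M, N)$ stays in a set of the form $[r, 1 - 1/n] \times \{|y| \leq n\}$, so hypothesis \eqref{assumption1} supplies a uniform bound on $c(M, N)$ along the stopped trajectory---exactly the integrability needed to make the It\^o expansion below well-defined.

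The main step is to apply a generalized It\^o formula to $U(M_s, N_s)$. Since $U$ is only piecewise $C^2$ on the open sets $D_i$, whose closures cover $D$, but is globally continuous on $D$ and $C^1$ in the interior, a standard extension of It\^o's formula applies (essentially Wang~\cite{W}, Proposition~2, building on Peskir's change-of-variable formula): the boundaries between the $D_i$ contribute nothing thanks to the $C^1$ regularity, and the formula takes its usual form---a local martingale plus a compensator built from $U_{xx}, U_{xy}, U_{yy}$ integrated against the predictable quadratic (co)variations of $(M, N)$, together with a sum over jumps. For the continuous part, one writes $d[M,M]^c_s = h_s^2 \, d\mu_s$, $d[N,N]^c_s = |k_s|^2 \, d\mu_s$, and $d[M, N^j]^c_s = h_s k_s^j \, d\mu_s$ for a predictable scalar $h_s$, vector $k_s \in \mathcal{H}$, and dominating measure $\mu$; hypothesis \eqref{assumption2} then bounds the continuous compensator above by $-c(M_s, N_s)(h_s^2 - |k_s|^2)\, d\mu_s$, which is nonpositive because differential subordination forces $|k_s|^2 \leq h_s^2$ $\mu$-almost everywhere.

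For the jump part one uses the standard Burkholder line-segment device: integrate \eqref{assumption2} with $h = \Delta M_s$ and $k = \Delta N_s$ along the segment from $(M_{s-}, N_{s-})$ to $(M_s, N_s)$, invoking $|\Delta N_s| \leq |\Delta M_s|$, another consequence of differential subordination. This makes the jump contribution also nonpositive, so the stopped process $U(M_{\cdot \wedge \tau_n}, N_{\cdot \wedge \tau_n})$ is a supermartingale on $[0, t]$; taking expectations yields \eqref{assertion}. The main technical obstacle is this last step, namely the jump analysis in combination with the piecewise regularity of $U$: one must verify that the line-segment device is compatible with the covering of $D$ by the $D_i$'s, which is precisely where the global $C^1$ hypothesis is essential, and one must carefully track the stopping argument on the jumps at $\tau_n$ so that the resulting local supermartingale property passes to expectations under the uniform bound on $c(M,N)$ obtained from \eqref{assumption1}.
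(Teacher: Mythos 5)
The paper does not actually prove this lemma: it states only that the result ``is a slight modification of the results of Wang \cite{W} (see Proposition~2 there)'' and then moves on. So there is no proof in the paper to compare yours against; you have supplied an argument where the authors supplied a citation. Your sketch is indeed the standard Burkholder--Wang scheme (translate/scale, localize, generalized It\^o, continuous compensator controlled by \eqref{assumption2} and $|k|\le|h|$, jump terms controlled by integrating \eqref{assumption2} along the jump segment), and you correctly identify the piecewise regularity of $U$ and the jump analysis as the technical crux.

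There is, however, one concrete error in your localization. You claim that $\tau_n=\inf\{s:M_s\ge 1-1/n \mbox{ or } |N_s|\ge n\}\to\infty$ a.s.\ ``since the trajectories of $M$ and $N$ are c\'adl\'ag and hence bounded on compact time intervals.'' Boundedness of $M$ on compacts is automatic ($M\le 1$), but it does not keep $M$ away from $1$: the shift $x\mapsto(1-r)x+r$ moves the range to $[r,1]$, so it protects against the endpoint $0$ only, and a $[0,1]$-valued martingale $X$ can well reach $1$ in finite time (e.g.\ a stopped walk). On that event $\tau_n$ stays bounded, so as written $\tau_n\not\to\infty$. The fix is available but must be said: once $X$ reaches $1$ it is absorbed there (as a bounded martingale), differential subordination then freezes $Y$, so $(M,N)$ is constant afterward and one may extend the stopping times past the absorption time without affecting \eqref{assertion}; alternatively, note $\tau_n\wedge t\to T\wedge t$ with $X_{T\wedge t}=X_t$ on $\{T\le t\}$. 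You should also say explicitly why the jump at time $\tau_n$ itself (which may land at $M_{\tau_n}=1$, outside the region where you invoked \eqref{assumption1}) is harmless: along the jump segment one uses only the sign of the right-hand side of \eqref{assumption2} together with $|\Delta N|\le|\Delta M|$, so no bound on $c$ is needed there. With these two points repaired, your sketch is a faithful rendering of the argument the paper delegates to Wang.
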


Now we will introduce the special functions $U^\lambda:D\to \R$ corresponding to the inequality \eqref{martin}. We start with the case  $\lambda<1/2$ and consider the following subsets of $D$:
\begin{align*}
D_1&=\{(x,y)\in D: x+|y|< 2\lambda\},\\
D_2&=\{(x,y)\in D: 2\lambda <x+|y|<1\},\\
D_3&=\{(x,y)\in D: x+|y|>1\}.
\end{align*}
Let $U^\lambda$ be the function given by
$$ U^\lambda(x,y)=\begin{cases}
(4\lambda)^{-1}(|y|^2-x^2)+(\lambda-\ln(2\lambda))x & \mbox{if }(x,y)\in D_1,\\
|y|-\lambda+\lambda x-x\ln(x+|y|) & \mbox{if }(x,y)\in D_2,\\
(1-x)(x+|y|-\lambda) & \mbox{if }(x,y)\in D_3.
\end{cases}$$
It is easy to see that $U^\lambda$ extends to a continuous function on the whole strip $D$. 
During the proof of the properties listed in Lemma \ref{mainlemma}, we will also need the following auxiliary function $c^\lambda$ given on $D_1\cup D_2\cup D_3$:
$$ c^\lambda(x,y)=\begin{cases}
(2\lambda)^{-1} &  \mbox{if }(x,y)\in D_1,\\
(x+|y|)^{-1} & \mbox{if }(x,y)\in D_2,\\
1 & \mbox{if }(x,y)\in D_3.
\end{cases}$$
Next, we turn to the case $\lambda\geq 1/2$. This time the special function is given by four different formulas on the following subsets of the strip  $[0,1]\times \mathcal{H}$:
\begin{align*}
D_0&=\big\{(x,y):|y|\leq \min\{x,1-x\}\big\},\\
D_1&=\{(x,y): 0\leq x\leq 1/2,\,x<|y|<x+\lambda-1/2\},\\
D_2&=\{(x,y):1/2<x\leq 1,\,1<x+|y|<\lambda+1/2\},\\
D_3&=\big([0,1]\times \mathcal{H}\big)\setminus (D_0\cup D_1\cup D_2).
\end{align*}
Let $U^\lambda :[0,1]\times \mathcal{H}\to\R$ be a continuous function given by
$$ U^\lambda(x,y)=\begin{cases}
\frac{1}{2}\exp(1-2\lambda)(|y|^2-x^2+x) & \mbox{if }(x,y)\in D_0,\\
\frac{1}{2}x\exp\big(2|y|-2x-2\lambda+1\big) & \mbox{if }(x,y)\in D_1,\\
\frac{1}{2}(1-x)\exp\big(2|y|+2x-2\lambda-1\big) & \mbox{if }(x,y)\in D_2,\\
\frac{1}{2}\big[(|y|-\lambda+1/2)^2-x^2+x\big] & \mbox{if }(x,y)\in D_3.
\end{cases}$$
As in the case $\lambda<1/2$, we will also exploit the auxiliary function $c^\lambda:[0,1]\times \mathcal{H}\to [0,\infty)$. It is given by
$$ \quad \,\,\,c^\lambda(x,y)=\begin{cases}
\exp(1-2\lambda) & \mbox{if }(x,y)\in D_0,\\
\exp(2|y|-2x-2\lambda+1)\qquad \qquad  & \mbox{if }(x,y)\in D_1,\\
\exp(2|y|+2x-2\lambda-1) & \mbox{if }(x,y)\in D_2,\\
1 & \mbox{if }(x,y)\in D_3.
\end{cases}$$

We turn to the analysis of the above objects.
\begin{lemma}\label{propo}
For any $\lambda>0$, the functions $U^\lambda$ and $c^\lambda$ satisfy the assumptions of Lemma~\ref{mainlemma}.
\end{lemma}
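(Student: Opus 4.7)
The plan is to check the four hypotheses of Lemma~\ref{mainlemma} (continuity on $D$, $C^1$ in the interior, $C^2$ on each $D_i$, local boundedness of $c^\lambda$, and the concavity-type bound \eqref{assumption2}) separately in the two cases $\lambda<1/2$ and $\lambda\geq 1/2$. A key structural observation that I would use throughout is that $U^\lambda(x,y)=V^\lambda(x,|y|)$ depends on $y$ only through its norm $r=|y|$. This lets me reduce all derivative computations to those of a function of two real variables.

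First I would verify regularity. Continuity of $U^\lambda$ on $D$ and $C^1$-matching across the interfaces $\{x+|y|=2\lambda\}$, $\{x+|y|=1\}$ (for $\lambda<1/2$) and the curves $|y|=x$, $|y|=1-x$, $|y|=x+\lambda-1/2$, $|y|+x=\lambda+1/2$ (for $\lambda\geq 1/2$) is a direct substitution: for example at $x+|y|=2\lambda$ the formulas on $D_1$ and $D_2$ both reduce to $\lambda-x+\lambda x-x\ln(2\lambda)$, and matching gradients similarly holds. Within each $D_i$ the formula is manifestly $C^2$ (polynomial, exponential, or $x\ln(x+r)$ away from $x+r=0$).

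The heart of the argument is \eqref{assumption2}. Writing $U^\lambda(x,y)=V(x,r)$ and decomposing $k=s\,y/|y|+k^\perp$ with $k^\perp\perp y$ and $|k^\perp|^2=|k|^2-s^2$, a short computation gives
\[
U_{xx}h^2+2\langle U_{xy}h,k\rangle+\langle U_{yy}k,k\rangle
=\bigl[V_{xx}h^2+2V_{xr}hs+V_{rr}s^2\bigr]+\frac{V_r}{r}(|k|^2-s^2).
\]
Hence \eqref{assumption2} with $c=c^\lambda(x,y)$ reduces to the two scalar conditions
\[
\text{(a)}\ V_{xx}h^2+2V_{xr}hs+V_{rr}s^2\leq -c(h^2-s^2)\ \ \forall h,s\in\mathbb{R},
\qquad
\text{(b)}\ \frac{V_r}{r}\leq c.
\]
I would then verify (a) and (b) region by region. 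Illustrative computations: on $D_1$ (for $\lambda<1/2$), $V_{xx}=-V_{rr}=-(2\lambda)^{-1}$ and $V_{xr}=0$, so (a) becomes equality at $c=(2\lambda)^{-1}$, while $V_r/r=(2\lambda)^{-1}$ matches (b). On $D_2$ (for $\lambda<1/2$), the difference between the two sides of (a) with $c=(x+r)^{-1}$ collapses, after writing $u=x+r$, to $-r(h+s)^2/u^2\leq 0$, and (b) is equality. On $D_3$ (for $\lambda<1/2$), one gets $V_{xx}=-2$, $V_{xr}=-1$, $V_{rr}=0$; the discriminant computation yields (a) precisely at $c=1$, and (b) follows from $x+r>1\Rightarrow (1-x)/r<1$. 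For the regions $D_1,D_2$ in the case $\lambda\geq 1/2$ the bracketed expression in (a) with $c=c^\lambda$ simplifies (after cancelling the common exponential factor) to $(2x-1)(h-s)^2$ and $(1-2x)(h+s)^2$ respectively, each $\leq 0$ by the sign of $2x-1$ on the respective region; (b) uses the defining inequalities $x<|y|$ on $D_1$ and $x+|y|>1$ on $D_2$. The $D_0$ and $D_3$ cases are again immediate from the quadratic/exponential structure.

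Finally, the local boundedness condition \eqref{assumption1} is easy: in each region $c^\lambda$ is either a constant $\leq 1$ (respectively $\leq e^{1-2\lambda}$), or of the form $(x+|y|)^{-1}$ on $D_2$ (where $x+|y|>2\lambda$, hence bounded by $(2\lambda)^{-1}$), or an exponential $\exp(2|y|\pm 2x-2\lambda\pm 1)$ whose exponent the defining inequalities of $D_1,D_2$ force to be negative, so $c^\lambda\leq 1$. The main obstacle I anticipate is not any single estimate but the bookkeeping: one must choose the piecewise formulas for $V$ so that the boundary matching and the two algebraic identities collapse in each region simultaneously. The candidate $U^\lambda$ was constructed precisely for this purpose, so once the reduction to (a) and (b) is in hand the verification is mechanical, the only delicate step being the algebraic simplification in $D_2$ of the case $\lambda<1/2$, where two rational-function terms must cancel against each other.
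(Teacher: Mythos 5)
Your proposal is correct and follows essentially the same approach as the paper: verify regularity and matching, then check \eqref{assumption2} region by region. The one genuine organizational improvement you bring is the explicit reduction of \eqref{assumption2} to the two scalar conditions (a) $V_{xx}h^2+2V_{xr}hs+V_{rr}s^2\leq -c(h^2-s^2)$ and (b) $V_r/r\leq c$ via the radial decomposition $k=s\,y/|y|+k^\perp$; the paper instead computes the full Hessian quadratic form in each region and bounds it directly (which implicitly encodes both (a) and (b) at once). Your region-by-region computations --- $-r(h+s)^2/(x+r)^2$ on $D_2$ for $\lambda<1/2$, and $E(2x-1)(h-s)^2$, $F(1-2x)(h+s)^2$ on $D_1,D_2$ for $\lambda\geq 1/2$, together with the (b)-type checks using $x<|y|$ and $x+|y|>1$ --- all agree with (or streamline) the paper's formulas, and the boundedness of $c^\lambda$ on compact slabs is handled identically. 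One tiny imprecision in your last paragraph: for $\lambda<1/2$ the constant $c^\lambda=(2\lambda)^{-1}$ on $D_1$ is not $\leq 1$, but this is immaterial since \eqref{assumption1} only requires boundedness, which all your pieces clearly satisfy.
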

\begin{proof}
The fact that $U^\lambda$ is of class $C^1$ in the interior of $D$ is straightforward and reduces to the tedious verification that its partial derivatives match at the common boundaries of the sets $D_i$. We leave the details to the reader. It is also evident that if $x$ is bounded away from $0$ and $1$, then the function $c^\lambda$ is uniformly bounded: this gives \eqref{assumption1}. The main technical difficulty lies in proving the inequality \eqref{assumption2}. Let us start with the case $\lambda<1/2$. If $(x,y)\in D_1$, then \eqref{assumption1}  is actually an equality. A little computation shows that if $(x,y)\in D_2$, then the left-hand side of \eqref{assumption2} equals
$$ -c^\lambda(x,y)(h^2-|k|^2)-|y|(x+|y|)^{-2}(h+\langle y',k\rangle)^2$$
(here and below, we  use the notation $y'=y/|y|$), so the bound holds true. Finally, if $(x,y)\in D_3$, then
\begin{align*}
U_{xx}(x,y)h^2&+2\langle U_{xy}(x,y)h,k\rangle+\langle U_{yy}(x,y)k,k\rangle\\
&= -2|h|^2-2h\langle y',k\rangle+\left(|k|^2-\langle y',k\rangle^2\right)\cdot \frac{1-|x|}{|y|}\\
&\leq \left(-|h|^2-2h\langle y',k\rangle-\langle y',k\rangle^2\right)-|h|^2+|k|^2\\
&\leq -|h|^2+|k|^2=-c^\lambda(x,y)(|h|^2-|k|^2). 
\end{align*}

We turn to the case $\lambda\geq 1/2$. The inequality \eqref{assumption2} is obvious for $D_0$; in fact, we get equality here. It is also easy to show the bound on $D_3$. Indeed, on this set we have
$$ 2U^\lambda(x,y)=|y|^2-x^2+x-2|y|(\lambda-1/2)+(\lambda-1/2)^2.$$
If the term $2|y|(\lambda-1/2)$ were absent, we would have equality in \eqref{assumption2}; since $\lambda\geq 1/2$, the function $(x,y)\mapsto 2|y|(\lambda-1/2)$ is convex and hence the desired bound is preserved. Next, we turn to the case when $(x,y)\in D_1$. Then it can be computed that the left-hand side of \eqref{assumption2} is equal to 
$ c^\lambda(x,y)(h^2-|k|^2)+I+II$, where
\begin{align*}
I&=\exp\big(2|y|-2x-2\lambda+1\big)(2x-1)\left(\frac{y\cdot k}{|y|}-h\right)^2\\
II&=\exp\big(2|y|-2x-2\lambda+1\big)(x/|y|-1)\left(|k|^2-\frac{(y\cdot k)^2}{|y|^2}\right).
\end{align*}
By the definition of $D_1$, we have $x\leq 1/2$ and $x\leq |y|$, which implies that both $I$ and $II$ are nonpositive; thus \eqref{assumption2} follows. Finally, to show the bound for $D_2$, we observe that $U^\lambda(x,y)=U^\lambda(1-x,y)$ for all $x,\,y$, so the inequality follows at once from the calculations for $D_1$.
\end{proof}

We will also need the following additional properties of $U^\lambda$. Recall the function $C$, given by \eqref{defC}.

\begin{lemma}
(i) For any $x\in [0,1]$ and $\lambda>0$ we have
\begin{equation}\label{init}
U^\lambda(x,0)\leq C(\lambda)x.
\end{equation}

(ii) For any $(x,y)\in D$ and $\lambda>0$ we have
\begin{equation}\label{maj}
U^\lambda(x,y)\geq (y-\lambda)_+(1-x).
\end{equation}
\end{lemma}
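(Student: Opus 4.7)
My plan is to verify both inequalities by direct case analysis on the pieces $D_i$ composing the domain of $U^\lambda$. In each piece, the bound reduces to an elementary sign argument or to a completion of squares, so the delicate aspect is the bookkeeping rather than any single calculation.

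For (i), specializing $y=0$ simply selects the region. When $\lambda \geq 1/2$, the entire segment $\{(x,0):x\in[0,1]\}$ lies in $D_0$, and the $D_0$ formula evaluates to $C(\lambda)\,x(1-x)\leq C(\lambda)x$. When $\lambda<1/2$, the segment splits at $x=2\lambda$: on $[0,2\lambda]\subset D_1$ the formula gives $C(\lambda)x - x^2/(4\lambda)\leq C(\lambda)x$; on $[2\lambda,1]\subset D_2$, rearrangement reduces the claim to $x\ln(x/(2\lambda))\geq -\lambda$, which is automatic since the left-hand side is nonnegative for $x\geq 2\lambda$.

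For (ii), I would split each region further according to whether $|y|\leq\lambda$, so that $(|y|-\lambda)_+ = 0$ and the task reduces to showing $U^\lambda\geq 0$, or $|y|>\lambda$, where $U^\lambda-(|y|-\lambda)(1-x)$ must be computed in closed form and shown to be nonnegative. When $\lambda\geq 1/2$, the defining conditions of $D_0, D_1, D_2$ all force $|y|\leq\lambda$, and each of the three formulas is a manifestly nonnegative product; the only interesting region is $D_3$, where the substitution $z=|y|-\lambda$ rewrites the difference as the sum of squares $\tfrac12(z-\tfrac12)^2+\tfrac12 x(1-x)+zx$. When $\lambda<1/2$, on $D_3$ the difference collapses to $x(1-x)$; on $D_2$ with $|y|\geq\lambda$ it simplifies to $x(|y|-\ln(x+|y|))$, nonnegative because $x+|y|<1$; on $D_1$ with $|y|>\lambda$ it decomposes as $\frac{(|y|-2\lambda)^2-x^2}{4\lambda}+x(|y|-\ln(2\lambda))$, where the first summand is nonnegative by the constraint $x+|y|<2\lambda$ and the second by $\ln(2\lambda)<0$. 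The remaining subcases $|y|\leq\lambda$ in $D_1,D_2$ yield $U^\lambda\geq 0$ by direct sign inspection, or on $D_2$ by using that $U^\lambda$ is nondecreasing in $|y|$, reducing either to the $D_1$-boundary already handled or to an elementary concavity check at $|y|=0$.

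The main obstacle is purely clerical: enumerating the subregions and choosing the correct algebraic identity in each. No analysis beyond completing the square and a one-variable monotonicity/concavity argument is required, so the lemma is really a finite verification.
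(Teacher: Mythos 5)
Your proposal is correct but takes a different route from the paper's. The paper exploits the concavity of $x\mapsto U^\lambda(x,y)$ (for each fixed $y$), which it has already established via the second-order inequality \eqref{assumption2} in Lemma~\ref{propo}: setting $k=0$ there gives $U^\lambda_{xx}\leq -c^\lambda\leq 0$. With concavity in hand, (i) is a one-line tangent-line comparison at $x=0$ (since $U^\lambda(0,0)=0$ and $U^\lambda_x(0+,0)=C(\lambda)$), and (ii) reduces to checking $x\in\{0,1\}$ only, because the right-hand side of \eqref{maj} is affine in $x$; the paper then verifies these two boundary values by the completions-of-squares you also use. Your approach instead does a full region-by-region verification over the $D_i$, which is more elementary and self-contained but substantially longer, and a couple of subcases are glossed over. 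In particular, for $\lambda<1/2$ on $D_1$ with $|y|\leq\lambda$, the nonnegativity of $U^\lambda=(4\lambda)^{-1}(|y|^2-x^2)+C(\lambda)x$ is not a ``direct sign inspection'' when $x>|y|$: one needs, e.g., the bound $(4\lambda)^{-1}(|y|^2-x^2)\geq (|y|-x)/2\geq -x/2$ (from $x+|y|<2\lambda$) together with $C(\lambda)\geq 1/2$ for $\lambda\leq 1/2$, or some equivalent argument. Similarly the $D_2$ subcase with $|y|\leq\lambda$ is dispatched a bit loosely. With those details filled in, your computations are correct --- e.g.\ your decomposition of the difference on $D_3$ (for $\lambda\geq 1/2$) as $\tfrac12(z-\tfrac12)^2+\tfrac12 x(1-x)+zx$ with $z=|y|-\lambda$ is exactly right, as is $x(|y|-\ln(x+|y|))$ on $D_2$ and $\frac{(|y|-2\lambda)^2-x^2}{4\lambda}+x(|y|-\ln 2\lambda)$ on $D_1$. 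What the paper's concavity trick buys you is precisely the ability to skip all interior regions: it converts the bulk of your case analysis into a single observation.
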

\begin{proof}
Note that the conditions listed in Lemma \ref{mainlemma}, which have been proved above, yield the following property of $U^\lambda$: for any fixed  $y\in\mathcal{H}$, the function $x\mapsto U^\lambda(x,y)$ is concave. Having observed this, \eqref{init} follows at once, because it is equivalent to
$$ U^\lambda(x,0)\leq U^\lambda(0,0)+U^\lambda_x(0+,0)x.$$
Furthermore, the concavity implies that it is enough to prove \eqref{maj} for $x\in \{0,1\}$, since for any fixed $y$, the right-hand side of \eqref{maj} is linear in $x$. Suppose first that $x=0$ and $\lambda<1/2$. If $|y|\leq 2\lambda$, the majorization is equivalent to $(|y|-2\lambda)^2\geq 0$, which is of course true; if $|y|>2\lambda$, then both sides are equal. Assume next that $x=0$ and $\lambda\geq 1/2$. If $|y|\leq \lambda-1/2$, then we get equality; if $|y|>\lambda-1/2$, the inequality \eqref{maj} can be rewritten as $(|y|-\lambda-1/2)^2\geq 0$, which holds true. If $x=1$, then the majorization is equivalent to $U^\lambda(1,|y|)\geq 0$, which is evident for all choices of $\lambda$.
\end{proof}

We turn to the proof of Theorem \ref{mainth}.

\begin{proof}[Proof of \eqref{martin}] 
Combining Lemma \ref{mainlemma} and Lemma \ref{propo}, we obtain that for any $0<r<1/2$, any $t\geq 0$ and appropriate sequence $(\tau_n)_{n\geq 0}$ of stopping times,
\begin{align*}
  \E U^\lambda((1-r)X_{\tau_n\wedge t}+r,(1-r)Y_{\tau_n\wedge t})&\leq \E U^\lambda((1-r)X_0+r,(1-r)Y_0)\\
  &=\E U^\lambda((1-r)X_0+r,0).
 \end{align*}
By \eqref{maj}, this implies
$$  (1-r)\E\Big[ \big((1-r)|Y_{\tau_n\wedge t}|-\lambda\big)_+(1-X_{\tau_n\wedge t})\Big]\leq \E U^\lambda((1-r)X_0+r,0).$$
Now we let $n\to \infty$ and then $r\downarrow 0$ to obtain, in the light of Fatou's lemma,
$$ \E (|Y_t|-\lambda)_+(1-X_t)\leq \E U^\lambda(X_0,0).$$
It remains to use \eqref{init} and take the supremum over $t$. The inequality \eqref{martin} is established.
\end{proof}

\begin{proof}[Sharpness of \eqref{martin}]
Now we will construct discrete-time martingales showing that the constant $C(\lambda)$ cannot be replaced by a smaller number. Let us start with the case $\lambda<1/2$. Let $\kappa$ be a small number belonging to $[0,1]$ and let $N$ be a large positive integer. Set $\delta=(1-2\lambda)/(2N)$ and consider the Markov martingale $(f,g)$, whose distribution is uniquely determined by the following requirements.
\begin{itemize}
\item[(i)] We have $(f_0,g_0)\equiv (\kappa,0)$.
\item[(ii)] The state $(\kappa,0)$ leads to $(\kappa/2,-\kappa/2)$ or to $(\lambda+\kappa/2,\lambda-\kappa/2)$.
\item[(iii)] The state $(\kappa/2,-\kappa/2)$ leads to $(0,0)$ or to $(\lambda,-\lambda)$.
\item[(iv)] The state $(\lambda+\kappa/2,\lambda-\kappa/2)$ leads to $(0,2\lambda)$ or to $(2\lambda,0)$.
\item[(v)] The state $(\lambda,-\lambda)$ leads to $(0,-2\lambda)$ or to $(2\lambda,0)$.
\item[(vi)] The state of the form $(2\lambda+2k\delta,0)$ ($k=0,\,1,\,2,\,\ldots,\, N-1$) leads to $(0,2\lambda+2k\delta)$ or to $(2\lambda+2k\delta+\delta,-\delta)$.
\item[(vii)] The state of the form $(2\lambda+2k\delta+\delta,0)$ ($k=0,\,1,\,2,\,\ldots,\, N-1$) leads to $(0,-2\lambda-2k\delta-2\delta)$ or to $(2\lambda+2k\delta+2\delta,0)$.
\item[(viii)] All the states not mentioned above are absorbing.
\end{itemize}
Note that there is no need to specify the transition probabilities, they are uniquely determined by the requirement that $(f,g)$ is a martingale. Directly from the above definition, we infer that $0=|g_0|\leq f_0$ and for each $n$ we have $|dg_n|=|df_n|$; therefore, $g$ is differentially subordinate to $f$. We easily see that $(f,g)$ is a finite martingale: let $(f_\infty,g_\infty)$ denote its terminal variable. Clearly, we have $f_\infty \in\{0,1\}$, so
$$ \E \Big[(|g_\infty|-\lambda)_+(1-f_\infty)\Big]=\E (|g_\infty|-\lambda)_+.
 $$
We easily see from the conditions (i)-(viii) above that the variable $|g_\infty|$ takes values $0$, $2\lambda$, $2\lambda+2\delta$, $2\lambda+4\delta$, $\ldots$, $1$. Using (ii)-(v), we compute that 
\begin{align*}
 \mathbb{P}(|g_\infty|=2\lambda)&=\frac{\kappa}{2\lambda}\cdot\frac{\lambda-\kappa/2}{2\lambda}+\frac{2\lambda-\kappa}{2\lambda}\cdot\frac{\kappa}{2\lambda}\cdot \frac{1}{2}+\frac{\kappa}{2\lambda}\cdot \frac{\delta}{2\lambda+\delta}\\
&=\frac{\kappa(2\lambda-\kappa)}{4\lambda^2}+\frac{\kappa}{2\lambda}\cdot \frac{\delta}{2\lambda+\delta}.
 \end{align*}
 Indeed, $\frac{\kappa}{2\lambda}\cdot\frac{\lambda-\kappa/2}{2\lambda}$ is the probability that $f$ goes to $\lambda+\kappa/2$ and then jumps to $0$; the second summand corresponds to the case when $f$ goes to $\kappa/2$ in the first step, then to $\lambda$ and finally to $0$; the third term $\frac{\kappa}{2\lambda}\cdot \frac{\delta}{2\lambda+\delta}$ comes from the following possibility: $f$ might get to the point $2\lambda$ after a few steps; this occurs with probability $\kappa/(2\lambda)$; if it is so, $f$ may jump to $0$, which happens with probability $\delta/(2\lambda+\delta)$ (see (vi)).

The next step is to prove that for $k=0,\,1,\,2,\,\ldots,\,N$,
$$ \mathbb{P}(f\,\mbox{ ever visits }\,2\lambda+2k\delta)=\frac{\kappa}{2\lambda+2k\delta}.$$
The case $k=0$ has already appeared in the above considerations, the general case follows from an easy induction and the requirements (vi) and (vii). In consequence, by the further exploitation of these two conditions, we get that for $k=1,\,2,\,\ldots,\,N-1$, the event $\{|g_\infty|=2k\delta\}$ is the disjoint union of the following two: either 
$f$ visits $2\lambda+2(k-1)\delta$ after several steps, then goes to $2\lambda+2(k-1)\delta+\delta$ and then to $0$, or $f$ visits $2\lambda+2(k-1)\delta$ after several steps, then goes to $2\lambda+2(k-1)\delta+\delta$, then to $2\lambda+2k\delta$ and then to $0$. Computing the corresponding probabilities, we see that
\begin{align*} 
\mathbb{P}(|g_\infty|=2\lambda+2k\delta)&=\frac{\kappa}{2\lambda+2(k-1)\delta}\cdot \frac{2\lambda+2(k-1)\delta}{2\lambda+(2k-1)\delta} \times \\
&\quad \times  \left(\frac{\delta}{2\lambda+2k\delta}+\frac{2\lambda+(2k-1)\delta}{2\lambda+2k\delta}\cdot \frac{\delta}{2\lambda+2k\delta+\delta}\right)\\
&=\frac{2\kappa\delta}{(2\lambda+(2k-1)\delta)(2\lambda+(2k+1)\delta)}\\
&\geq \frac{2\kappa \delta}{(2\lambda+2k\delta)^2}.
\end{align*}
The probability $\mathbb{P}(|g_\infty|=1)$ can be derived similarly, but actually we will not need this. Namely, we can write
\begin{align*}
\frac{\E \big[(|g_\infty|-\lambda)_+(1-f_\infty)\big]}{\E f_0}=\frac{\E (|g_\infty|-\lambda)_+}{\kappa}&\geq \frac{2\lambda-\kappa}{4\lambda}+\sum_{k=1}^{N-1} \frac{2\delta(\lambda+2k\delta)}{(2\lambda+2k\delta)^2}.
\end{align*}
Now if $\delta$ is appropriately small, then the latter expression  can be made arbitrarily close to
$$ \frac{2\lambda-\kappa}{4\lambda}+\int_\lambda^{1-\lambda} \frac{x}{(\lambda+x)^2}\mbox{d}x=-\frac{\kappa}{4}+\lambda-\ln(2\lambda)=-\frac{\kappa}{4}+C(\lambda).$$
Letting $\kappa\to 0$ we see that the constant $C(\lambda)$ is indeed optimal in \eqref{martin} for $\lambda<1/2$.

We turn to the case $\lambda\geq 1/2$. First, let us introduce the extremal sequence for $\lambda=1/2$. Fix a small $\kappa\in [0,1]$ and let $(f,g)$ satisfy
\begin{itemize}
\item[(i)] We have $(f_0,g_0)\equiv (\kappa,0)$.
\item[(ii)] The state $(x,0)$ leads to $(\kappa/2,-\kappa/2)$ or to $(1/2+\kappa/2,1/2-\kappa/2)$.
\item[(iii)] The state $(\kappa/2,-\kappa/2)$ leads to $(0,0)$ or to $(1/2,-1/2)$.
\item[(iv)] The state $(1/2+\kappa/2,1/2-\kappa/2)$ leads to $(0,1)$ or to $(1,0)$.
\item[(v)] The state $(1/2,-1/2)$ leads to $(0,-1)$ or to $(1,0)$.
\end{itemize} 
If $\lambda>1/2$, then fix a large positive integer $N$ and put $\delta=(\lambda-1/2)/(2N)$. Consider the martingale $(f,g)$ satisfying
\begin{itemize}
\item[(i)] We have $(f_0,g_0)\equiv (\kappa,0)$.
\item[(ii)] The state $(\kappa,0)$ leads to $(\kappa/2,-\kappa/2)$ or to $(1/2+\kappa/2,1/2-\kappa/2)$.
\item[(iii)] The state $(\kappa/2,-\kappa/2)$ leads to $(0,0)$ or to $(1/2,-1/2)$.
\item[(iv)] The state $(1/2+\kappa/2,1/2-\kappa/2)$ leads to $(1/2,1/2)$ or to $(1,0)$.
\item[(v)] The state of the form $(1/2,1/2+2k\delta)$, $k=0,\,1,\,2,\,\ldots,\,N-1$ leads to $(0,2k\delta)$ or to $(1/2+\delta,1/2+2k\delta+\delta)$. Symmetrically, the state of the form $(1/2,-1/2-2k\delta)$, $k=0,\,1,\,2,\,\ldots,\,N-1$ leads to $(0,-2k\delta)$ or to $(1/2+\delta,-1/2-2k\delta-\delta)$.
\item[(vi)] The state of the form $(1/2+\delta,1/2+2k\delta+\delta)$, $k=0,\,1,\,2,\,\ldots,\,N-1$ leads to $(1,2k\delta+2\delta)$ or to $(1/2,1/2+2k\delta+2\delta)$. Symmetrically, the state of the form $(1/2+\delta,-1/2-2k\delta-\delta)$, $k=0,\,1,\,2,\,\ldots,\,N-1$ leads to $(1,-2k\delta-2\delta)$ or to $(1/2,-1/2-2k\delta-2\delta)$.
\item[(vii)] The state $(1/2,\lambda)$ leads to $(0,\lambda+1/2)$ or to $(1,\lambda-1/2)$. Symmetrically, the state $(1/2,-\lambda)$ leads to $(0,-\lambda-1/2)$ or to $(1,-\lambda+1/2)$.
\item[(viii)] All the states not mentioned above are absorbing.
\end{itemize} 
The calculations involved in the analysis of the above processes are  similar to those in the case $\lambda<1/2$. We leave the necessary verification to the reader.
\end{proof} 

Finally, let us formulate  a corollary which will be important to us later.

\begin{corr}
Assume that $X$, $Y$ are martingales taking values in $[0,1]$ and $\mathcal{H}$, respectively. If $Y$ is differentially subordinate to $X$ and satisfies $Y_0=0$, then for any $\lambda> 0$, $t\geq 0$ and any $A\in \F$ we have
\begin{equation}\label{martin2}
\E |Y_t|(1-X_t)1_A\leq C(\lambda)\E X_0+\lambda\E(1-X_t)1_A .
\end{equation}
\end{corr}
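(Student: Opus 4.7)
The plan is to reduce the inequality to Theorem \ref{mainth} via the elementary pointwise bound
$$|Y_t|\leq (|Y_t|-\lambda)_+ + \lambda,$$
which is valid for every $\lambda>0$. First I would multiply both sides by the nonnegative random variable $(1-X_t)1_A$ (note $X_t\in[0,1]$ and $1_A\geq 0$), yielding
$$|Y_t|(1-X_t)1_A \leq (|Y_t|-\lambda)_+(1-X_t)1_A + \lambda(1-X_t)1_A.$$

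Next, since $(|Y_t|-\lambda)_+(1-X_t)\geq 0$ and $1_A\leq 1$, I can drop the indicator on the first term to obtain
$$|Y_t|(1-X_t)1_A \leq (|Y_t|-\lambda)_+(1-X_t) + \lambda(1-X_t)1_A.$$
Taking expectations and applying Theorem \ref{mainth} to the first term, together with the observation made right after its statement that $\sup_{t\geq 0}\E X_t = \E X_0$ (since $X$ is a martingale), we get
$$\E|Y_t|(1-X_t)1_A \leq C(\lambda)\E X_0 + \lambda\E(1-X_t)1_A,$$
which is exactly \eqref{martin2}. There is no real obstacle here: the bound is a direct and clean consequence of the main martingale estimate \eqref{martin} combined with truncation at level $\lambda$; introducing the set $A$ only requires that we not enlarge it when we discard the indicator on the nonnegative subordination term.
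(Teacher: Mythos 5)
Your proof is correct and is essentially the same as the paper's: both reduce to inequality \eqref{martin} by truncating $|Y_t|$ at level $\lambda$ and discarding the indicator $1_A$ on the nonnegative term $(|Y_t|-\lambda)_+(1-X_t)$. The paper organizes this via a decomposition $A=A^-\cup A^+$ according to whether $|Y_t|<\lambda$ or $|Y_t|\geq\lambda$, while you instead invoke the pointwise bound $|Y_t|\leq(|Y_t|-\lambda)_++\lambda$; these are the same manipulation presented in slightly different form.
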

\begin{proof}
Consider the following decomposition: $A=A^-\cup A^+$, where
$$ A^-=A\cap\{|Y_t|<\lambda\},\qquad A^+=A\cap\{|Y_t|\geq \lambda\}.$$
Clearly,
$$ \E (|Y_t|-\lambda)(1-X_t)1_{A^-}\leq 0$$
and
$$ \E(|Y_t|-\lambda)(1-X_t)1_{A^+}\leq \E (|Y_t|-\lambda)_+(1-X_t)\leq C(\lambda)\E X,$$
where in the last passage we have exploited \eqref{martin}. Adding the two estimates above, we get
$$ \E (|Y_t|-\lambda)(1-X_t)1_A\leq C(\lambda)\E X_0,$$
which is precisely the claim.
\end{proof}

Taking $A=\Omega$, we obtain 
\begin{equation}\label{martin3}
\E |Y_t|(1-X_t)\leq C(\lambda)\E X_0+\lambda\E(1-X_t) .
\end{equation}
Minimizing the right hand side with respect to $\lambda$ and and again recalling the notation $||X||_1=\sup_{t\geq 0}\E |X_t|=\E X_0$, we easily arrive at the following result which is a martingale analogue of the Eremenko-Hamilton inequality  \eqref{gr3}. 

\begin{corollary}\label{EreHamAnalo} Assume that $X$, $Y$ are martingales taking values in $[0,1]$ and $\mathcal{H}$, respectively. If $Y$ is differentially subordinate to $X$ and satisfies $Y_0=0$, then
\begin{equation*}
 ||Y(1-X)||_1\leq \begin{cases}
||X\|_1+||X||_1\ln \left(\frac{1}{2||X||_1}\right), & \mbox{if }\|X\|_1<1/2,\\
 (1-||X\|_1)+\frac{1}{2}(1-||X\|_1)\ln \left(\frac{1}{1-||X\|_1}\right), & \mbox{if }\|X\|_1\geq 1/2.
\end{cases}
\end{equation*}
In particular,
\begin{equation}\label{martin4}
||Y(1-X)||_1\leq ||X||_1\ln\left(\frac{e}{2||X||_1}\right),
\end{equation}
and the constant $e/2$ is best possible.  
\end{corollary}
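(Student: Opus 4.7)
\emph{Proof plan.} The strategy is to apply inequality \eqref{martin3} and then optimize over the free parameter $\lambda > 0$. Since $X$ is a $[0,1]$-valued martingale, $\E X_t$ is constant in $t$, whence $\|X\|_1 = \E X_0$ and $\E(1-X_t) = 1-\|X\|_1$. Writing $a := \|X\|_1$ and taking the supremum over $t$ in \eqref{martin3} gives
\[
\|Y(1-X)\|_1 \;\leq\; \inf_{\lambda > 0}\bigl[\,C(\lambda)\,a + \lambda(1-a)\,\bigr].
\]

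I would then minimize the right-hand side branch-by-branch, using the definition \eqref{defC}. On $(0,\tfrac{1}{2}]$ the expression simplifies to $\lambda - a\ln(2\lambda)$, a strictly convex function with unique critical point $\lambda = a$; this is admissible precisely when $a \leq \tfrac{1}{2}$, giving the minimum value $a + a\ln(1/(2a))$. On $[\tfrac{1}{2},\infty)$ the expression $(a/2)e^{1-2\lambda} + \lambda(1-a)$ is convex with critical point $\lambda^\star = \tfrac{1}{2}\bigl(1+\ln(a/(1-a))\bigr)$; this is admissible precisely when $a \geq \tfrac{1}{2}$, and back-substitution yields minimum value $(1-a) + \tfrac{1-a}{2}\ln(a/(1-a))$, which in turn is at most the slightly weaker $(1-a) + \tfrac{1-a}{2}\ln(1/(1-a))$ printed in the statement (trivially, since $a \leq 1$). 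Combining the two admissibility regimes delivers the piecewise bound.

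For the cleaner consequence \eqref{martin4}, a shortcut avoids splitting into cases: one first verifies by elementary one-variable calculus that $C(\lambda) \leq \lambda - \ln(2\lambda)$ for every $\lambda > 0$ (the two sides match in value and first derivative at $\lambda = \tfrac{1}{2}$, and the second derivative of the difference is positive on $(\tfrac{1}{2},\infty)$). Plugging this uniform upper bound into the preceding display and optimizing at $\lambda = a$ yields directly
\[
\|Y(1-X)\|_1 \;\leq\; a - a\ln(2a) \;=\; a\ln\!\bigl(e/(2a)\bigr).
\]

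Finally, the sharpness of the constant $e/2$ should be read off the extremal discrete-time martingales from the sharpness portion of Theorem~\ref{mainth}: take that construction with the free parameter $\lambda$ there equal to $\kappa = \E f_0$, and let $\kappa \to 0$. Using the identity $\E|g_\infty|(1-f_\infty) = \E(|g_\infty|-\lambda)_+(1-f_\infty) - \E(\lambda-|g_\infty|)_+(1-f_\infty) + \lambda\E(1-f_\infty)$, the first summand is controlled below by \eqref{sharp0}, the last is a simple product, and the middle one can be shown to be negligible in the limit by inspecting the explicit transition rules (i)--(viii) of the sharpness proof (the event $\{|g_\infty| < \lambda, f_\infty = 0\}$ collects only negligible mass as $\kappa \to 0$). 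The main obstacle is exactly this last piece of bookkeeping --- confirming that $\|g(1-f)\|_1/\bigl(\|f\|_1\ln(e/(2\|f\|_1))\bigr) \to 1$ as $\kappa \to 0$, and thus that $e/2$ cannot be reduced. The optimization calculus itself is entirely routine.
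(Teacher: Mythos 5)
Your derivation of the piecewise bound is correct and matches the paper exactly: start from \eqref{martin3}, write $a=\|X\|_1$, and minimize $C(\lambda)a + \lambda(1-a)$ over $\lambda>0$ branch by branch. The critical-point calculus is right (critical point $\lambda=a$ on the first branch, $\lambda^\star=\tfrac12(1+\ln(a/(1-a)))$ on the second, admissible precisely in the complementary ranges of $a$), and you also correctly notice that the optimal value $(1-a)+\tfrac{1-a}{2}\ln(a/(1-a))$ is slightly sharper than the $\ln(1/(1-a))$ printed in the statement. Your shortcut to \eqref{martin4} via the uniform bound $C(\lambda)\le \lambda-\ln(2\lambda)$ for all $\lambda>0$ is also valid, and is a mild variation of the paper's route (which instead uses the elementary scalar inequality $(1-a)+\tfrac{1-a}{2}\ln\tfrac{1-a}{a}\le a\ln\tfrac{e}{2a}$ already recorded after Corollary~\ref{maincor}); either way the algebra is routine.

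The sharpness argument is where your proposal falls short. You propose extracting the optimality of $e/2$ from the elaborate $N$-step extremal family in the proof of Theorem~\ref{mainth}, with the coupling $\lambda=\kappa\to 0$, and you explicitly flag that the final bookkeeping (controlling $\E(\lambda-|g_\infty|)_+(1-f_\infty)$ as $\kappa\to 0$) is unverified. This is the genuine gap. It is also unnecessary effort: the paper observes that a single-step example already gives equality in \eqref{martin4} at $\|X\|_1=\tfrac12$. Take $f_0=g_0\equiv \tfrac12$ and let $f_1-f_0=-(g_1-g_0)$ be a Rademacher variable divided by $2$; then $(f_1,g_1)$ is $(1,0)$ or $(0,1)$ with probability $\tfrac12$ each, so $\|f\|_1=\tfrac12$ and $\E|g_1|(1-f_1)=\tfrac12=\|f\|_1\ln\!\bigl(e/(2\|f\|_1)\bigr)$. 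Since the right-hand side of \eqref{martin4} is strictly increasing in the constant at the point $\|X\|_1=\tfrac12$, any constant smaller than $e/2$ would be violated by this example, so $e/2$ is best possible. You should replace your $\kappa\to 0$ limiting scheme with this one-line example; as written, your sharpness claim is not established.
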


The sharpness of the inequality \eqref{martin4} follows already from the examples above.  It is, however,  very easy to give an example in this case by considering the  discrete-time setting.  Suppose that $f_0=g_0\equiv 1/2$ and let $f_1-f_0=g_0-g_1$ be a Rademacher variable divided by $2$. Then $||f||_1=1/2$ and $\mathbb{P}(g(1-f)=1)=1/2$, so
$$ ||g(1-f)||_1\geq 1/2=||f||_1\log (e/2||f||_1).$$

\section{Proof of Theorem \ref{mainthm}} 
Now we will show how the martingale inequalities studied in the preceding section yield the corresponding bounds for Fourier multipliers. 
We start by recalling the martingale representation of the multipliers from the class \eqref{defm}. We follow here the description  in \cite{BB} and \cite{BBB} and refer the reader to those papers for full details.  An alternate description based on the semigroup of the L\'evy process and stochastic integration can be found in \cite[\S4]{Ban1} and \cite[\S5]{AppBan}.

Let $m$ be the multiplier as in \eqref{defm}, with the corresponding parameters  $\phi,\,\psi,\,\mu$ and $\nu$. Assume in addition that $\nu(\R^d)$ is finite and nonzero. Then for any $s<0$ there is a L\'evy process $(X_{s,t})_{t\in [s,0]}$ with $X_{s,s}\equiv 0$, for which Lemmas \ref{diflema} and \ref{diflema2} below hold true. To state these, we need some notation. \def\faaf{ and define $\tilde{\nu}=\nu/|\nu|$. Consider the independent random variables $T_{-1}$, $T_{-2}$, $\ldots$, $Z_{-1}$, $Z_{-2}$, $\ldots$ such that for each $n=-1,\,-2,\,\ldots$, $T_n$ has exponential distribution with parameter $|\nu|$ and $Z_n$ takes values in $\R^d$ and has $\tilde{\nu}$ as the distribution. Next, put  $S_n=-(T_{-1}+T_{-2}+\ldots+T_n)$ for $n=-1,\,-2,\ldots$ and let
$$ X_{s,t}=\sum_{s<S_j\leq t}Z_j,\qquad X_{s,t-}=\sum_{s<S_j< t}Z_j,\qquad \Delta X_{s,t}=X_{s,t}-X_{s,t-},$$
for $-\infty<s\leq t\leq 0$. }For a given $f\in L^\infty(\R^d)$, define the corresponding parabolic extension $\mathcal{U}_f$ to $(-\infty,0]\times \R^d$ by
$$ \mathcal{U}_f(s,x)=\E f(x+X_{s,0}).$$
Next, fix $x\in \R^d$, $s<0$ and let $f,\,\phi\in L^\infty(\R^d)$. We introduce the processes $F=(F^{x,s,f}_t)_{s\leq t\leq 0}$ and $G=(G^{x,s,f,\phi}_t)_{s\leq t\leq 0}$ by
\begin{equation}\label{defFG}
\begin{split}
 F_t&=\mathcal{U}_f(t,x+X_{s,t}),\\
 G_t&=\sum_{s<u\leq t}\big[(F_u-F_{u-}) \cdot \phi(X_{s,u}-X_{s,u-})\big]\\
&\quad -\int_s^t\int_{\R^d}\big[\mathcal{U}_f(v,x+X_{s,v-}+z)-\mathcal{U}_f(v,x+X_{s,v-}) \big]\phi(z)\nu(\mbox{d}z)\mbox{d}v.
\end{split}
\end{equation}
Now, fix $s<0$ and define the operator $\mathcal{S}=\mathcal{S}^{s,\phi,\nu}$ by the bilinear form
\begin{equation}\label{defS}
 \int_{\R^d}\mathcal{S}f(x)g(x)\mbox{d}x=\int_{\R^d}\E \big[G_0^{x,s,f,\phi}g(x+X_{s,0})\big]\mbox{d}x,
\end{equation}
where $f,\,g\in C_0^\infty(\R^d)$. We have the following facts, proved in \cite{BB} and \cite{BBB}. 
\begin{lemma}\label{diflema}
For any fixed $x,\,s,\,f,\,\phi$ as above, the processes $F^{x,s,f}$, $G^{x,s,f,\phi}$ are martingales with respect to $(\F_t)_{s\leq t\leq 0}=(\sigma(X_{s,t}:s\leq t))_{s\leq t\leq 0}$. Furthermore, if $||\phi||_\infty\leq 1$, then $G^{x,s,f,\phi}$ is differentially subordinate to $F^{x,s,f}$.
\end{lemma}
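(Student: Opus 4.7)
The plan is to verify the three assertions — martingale property of $F$, martingale property of $G$, and differential subordination $G\ll F$ — one by one, using standard facts for the pure-jump L\'evy process $X_{s,\cdot}$ (which, because $\nu(\R^d)<\infty$, is a compound Poisson process).

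First, for $F=F^{x,s,f}$ I would argue by the Markov property. Conditionally on $\F_t$ the increment $X_{s,0}-X_{s,t}$ is independent of $\F_t$ and has the same law as $X_{t,0}$, so by the tower property and the definition of the parabolic extension,
\[
 \E\bigl[f(x+X_{s,0})\,\big|\,\F_t\bigr]=\E\bigl[f\bigl((x+X_{s,t})+X_{t,0}\bigr)\,\big|\,\F_t\bigr]=\mathcal{U}_f(t,x+X_{s,t})=F_t.
\]
Since $f\in L^\infty$, this displays $F$ as a bounded conditional expectation, hence a martingale.

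Second, for $G=G^{x,s,f,\phi}$ I would read the formula \eqref{defFG} as a \emph{compensated} sum over the jump times of $X_{s,\cdot}$. Note that $F_u-F_{u-}=\mathcal{U}_f(u,x+X_{s,u-}+\Delta X_{s,u})-\mathcal{U}_f(u,x+X_{s,u-})$, because $u\mapsto\mathcal{U}_f(u,x+X_{s,u})$ is continuous between jumps of $X_{s,\cdot}$. Consequently the integrand of the sum in \eqref{defFG} equals $H(u,\Delta X_{s,u})$ for the predictable kernel
\[
 H(u,z)=\bigl[\mathcal{U}_f(u,x+X_{s,u-}+z)-\mathcal{U}_f(u,x+X_{s,u-})\bigr]\phi(z),
\]
and the integral subtracted in \eqref{defFG} is precisely the compensator $\int_s^t\!\int_{\R^d}H(v,z)\,\nu(\mbox{d}z)\,\mbox{d}v$. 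Standard compensation for Poisson random measures therefore makes $G$ a local martingale; boundedness of $f$ and $\phi$ together with finiteness of $\nu$ give uniform $L^1$ bounds, promoting it to a true martingale.

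Third, for differential subordination I would compute the square brackets. The continuous-in-$t$ piece of $F$ (the drift coming from $\partial_t\mathcal{U}_f$ between jumps) and the continuous-in-$t$ piece of $G$ (the $\mbox{d}v$-integral compensator) are both of finite variation, so they contribute nothing to $[F,F]$ or $[G,G]$. Hence
\[
 [F,F]_t=\sum_{s<u\le t}(\Delta F_u)^2,\qquad [G,G]_t=\sum_{s<u\le t}(\Delta G_u)^2,
\]
and from the identification above $\Delta G_u=(\Delta F_u)\,\phi(\Delta X_{s,u})$. The hypothesis $\|\phi\|_\infty\le 1$ then forces $(\Delta G_u)^2\le(\Delta F_u)^2$ at every jump, so $[F,F]_t-[G,G]_t=\sum_{s<u\le t}\bigl[(\Delta F_u)^2-(\Delta G_u)^2\bigr]$ is nonnegative and nondecreasing in $t$, which is exactly the differential subordination of $G$ to $F$.

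The only potentially delicate step is matching the pathwise drift of $F$ against the compensator of its jumps in order to see $F$ as a martingale from a direct It\^o viewpoint; routing the argument through the Markov property (as above) sidesteps this. The rest is a mechanical application of the compensation formula for a compound Poisson jump measure and a pointwise comparison of jump sizes.
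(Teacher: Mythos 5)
Your argument is correct, but a direct comparison with the paper's own proof is moot: the paper does not prove Lemma~\ref{diflema} at all, stating only that these ``facts'' are ``proved in \cite{BB} and \cite{BBB}'' and referring the reader there. What you have written is a faithful reconstruction of the standard argument underlying those references: the Markov property displays $F$ as the Doob martingale $\E[f(x+X_{s,0})\mid\F_t]$; the identification $\Delta F_u=\mathcal{U}_f(u,x+X_{s,u-}+\Delta X_{s,u})-\mathcal{U}_f(u,x+X_{s,u-})$ (valid because $\mathcal{U}_f$ is continuous in its time variable between jumps) exhibits $G$ as a Poisson jump sum minus its $\nu(\mathrm{d}z)\,\mathrm{d}v$ compensator, hence a martingale; and, since both finite-variation pieces are absolutely continuous and therefore invisible to the quadratic variation, $[F,F]_t-[G,G]_t$ reduces to $\sum_{s<u\le t}\bigl(|\Delta F_u|^2-|\Delta G_u|^2\bigr)$, which is nonnegative and nondecreasing once $\|\phi\|_\infty\le1$.

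Two small points of care. First, since $\phi$ (and hence $G$) is complex-valued while $F$ is real-valued, the jump comparison should be written with moduli, $|\Delta G_u|^2=|\Delta F_u|^2\,|\phi(\Delta X_{s,u})|^2\le|\Delta F_u|^2$, and $[G,G]$ interpreted coordinatewise as the paper does for $\mathcal{H}$-valued processes; your $(\Delta G_u)^2$ notation silently does this but should be made explicit. Second, the paper's definition of differential subordination requires $[F,F]_t-[G,G]_t\ge0$ at the initial time as well; this holds here because $[F,F]_s=F_s^2\ge0=G_s^2=[G,G]_s$ (indeed $G_s=0$), a check worth recording even if it is automatic.
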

Let us stress here that $\phi$, and hence also $G$, are complex valued. Note that in addition, we have $G_0=0$. 
The aforementioned representation of Fourier multipliers in terms of L\'evy processes is as follows.

\begin{lemma}\label{diflema2}
Let $1<p<\infty$ and $d\geq 2$. The operator $\mathcal{S}^{s,\phi,\nu}$ is well defined and extends to a bounded operator on $L^p(\R^d)$, which can be expressed as a Fourier multiplier with the symbol
\begin{equation*}
\begin{split}
 M(\xi)&=M_{s,\phi,\nu}(\xi)\\
&=\left[1-\exp\left(2s\int_{\R^d}(1-\cos\langle \xi, z\rangle )\nu(\mbox{d}z)\right)\right]
\frac{\int_{\R^d}(1-\cos\langle \xi, z\rangle)\phi(z)\nu(\mbox{d}z)}{\int_{\R^d}(1-\cos\langle \xi, z\rangle)\nu(\mbox{d}z)}
\end{split}
\end{equation*}
if $\int_{\R^d}(1-\cos\langle \xi, z\rangle)\nu(\mbox{d}z)\neq 0$, and $M(\xi)=0$ otherwise.
\end{lemma}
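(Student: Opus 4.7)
The plan is to first establish that $\mathcal{S}^{s,\phi,\nu}$ extends to a bounded operator on $L^p(\R^d)$, and then to identify its symbol by computing the bilinear form \eqref{defS} in Fourier variables on a dense class of test functions.

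For the boundedness, I would proceed as follows. By Lemma \ref{diflema}, the process $G^{x,s,f,\phi}$ is a martingale starting from $0$ and differentially subordinate to $F^{x,s,f}$, which starts from $\mathcal{U}_f(s,x)$. Burkholder's martingale $L^p$-inequality (\cite{B0}, \cite{W}) therefore yields $\E|G_0^{x,s,f,\phi}|^p \leq (p^*-1)^p \,\E|F_0^{x,s,f}|^p = (p^*-1)^p \,\E|f(x+X_{s,0})|^p$. Integrating in $x$, using Fubini and the fact that $X_{s,0}$-translation is measure-preserving, gives $\int_{\R^d}\E|G_0^{x,s,f,\phi}|^p\,\mathrm{d}x \leq (p^*-1)^p \|f\|_{L^p}^p$. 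Applying H\"older's inequality in the definition \eqref{defS} then yields $|\int \mathcal{S}f\cdot g\,\mathrm{d}x| \leq (p^*-1)\|f\|_{L^p}\|g\|_{L^q}$, so $\mathcal{S}$ extends to a bounded operator on $L^p(\R^d)$.

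To identify the symbol, first note that because $\mathcal{S}$ is translation-invariant by construction (the definition \eqref{defS} is unchanged under simultaneous translation of $f,g$), it is a Fourier multiplier, so it suffices to work on a convenient class. I would take $f,g\in C_0^\infty(\R^d)$ and rewrite the bilinear form in the Fourier domain. The parabolic extension satisfies the transform identity $\widehat{\mathcal{U}_f(t,\cdot)}(\xi)=\hat{f}(\xi)\exp\bigl(-(t-s)\int_{\R^d}(1-\cos\langle \xi,z\rangle)\nu(\mathrm{d}z)\bigr)\cdot\text{(imaginary phase)}$; when one symmetrizes over $z\mapsto -z$ later, the imaginary Lévy exponent parts cancel. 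Now split $G_0$ into the jump-sum and compensator parts from \eqref{defFG}. For the jump sum, I would use the Lévy--It\^o / compensation formula: for any bounded measurable $H(u,\omega,z)$,
\begin{equation*}
\E\!\!\sum_{s<u\leq 0}\!\!H(u,\omega,\Delta X_{s,u})\,g(x+X_{s,0}) = \E\!\int_s^0\!\!\int_{\R^d}\!H(u,\omega,z)\,g(x+X_{s,u-}+z+X_{u,0}')\,\nu(\mathrm{d}z)\,\mathrm{d}u,
\end{equation*}
where $X'_{u,0}$ is an independent copy. Applied to $H(u,\omega,z)=[\mathcal{U}_f(u,x+X_{s,u-}+z)-\mathcal{U}_f(u,x+X_{s,u-})]\phi(z)$, this expresses both the jump and compensator terms as integrals against $\nu(\mathrm{d}z)\,\mathrm{d}u$. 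Plancherel in $x$ then converts the translation kernels into multiplications by complex exponentials $e^{-i\langle \xi,z\rangle}$, and the independence of the compound-Poisson increments $X_{s,u-}$ and $X_{u,0}$ factors the expectation into a product of characteristic functions.

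After symmetrizing $z\leftrightarrow -z$ (allowed by reality) and carrying out the $\mathrm{d}u$ integration, the $u$-integral produces exactly the factor $1-\exp\bigl(2s\int(1-\cos\langle \xi,z\rangle)\nu(\mathrm{d}z)\bigr)$ appearing in $M_{s,\phi,\nu}(\xi)$, while the remaining $\nu$-integrals produce the ratio $\int(1-\cos\langle \xi,z\rangle)\phi(z)\nu(\mathrm{d}z)\big/\int(1-\cos\langle \xi,z\rangle)\nu(\mathrm{d}z)$. Comparing with $\int\hat{f}(\xi)\overline{\hat{g}(\xi)}M(\xi)\,\mathrm{d}\xi$ identifies $M=M_{s,\phi,\nu}$.

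I expect the main technical obstacle to be the careful bookkeeping in the Plancherel computation: the jump-sum term and the compensator term are each individually only conditionally convergent expressions (hence the specific compensated form used in \eqref{defFG}), and one must pair them up before sending them through the Fourier transform to get the cancellation of the imaginary parts of the Lévy exponents and to arrive at the clean expression with $1-\cos\langle\xi,z\rangle$ in both numerator and denominator. The finiteness of $\nu$ should make all the exchanges of sum/integral/expectation rigorous without approximation, which is why the lemma is stated under that hypothesis.
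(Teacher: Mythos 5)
The paper does not prove Lemma \ref{diflema2}; it cites \cite{BB} and \cite{BBB} as the source. Your proposal follows the same route used there: Burkholder's $L^p$ inequality applied to the differentially subordinate pair $(F,G)$ to get boundedness, then the compensation formula for the Poisson random measure together with the Markov decomposition $X_{s,0}=X_{s,u-}+\Delta X_{s,u}+(X_{s,0}-X_{s,u})$ to reduce both the jump sum and its compensator to $\mathrm{d}u\,\nu(\mathrm{d}z)$ integrals, followed by Plancherel in $x$.

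Two small inaccuracies worth flagging, neither fatal. First, you do not actually need to symmetrize in $z\mapsto -z$: once you subtract the compensator from the jump term and pass to Fourier variables, the integrand contains the factor $(e^{i\langle\xi,z\rangle}-1)\overline{(e^{i\langle\xi,z\rangle}-1)}=2(1-\cos\langle\xi,z\rangle)$, which is already real and even; the imaginary parts of the L\'evy exponent likewise cancel automatically because the two characteristic functions $e^{v\psi(\xi)}$ and $\overline{e^{v\psi(\xi)}}$ pair into $e^{2v\,\Re\psi(\xi)}$ under Plancherel. Second, your remark that the jump-sum and compensator are each ``only conditionally convergent'' is not accurate in the setting of the lemma: since $\nu$ is finite, $X_{s,\cdot}$ is compound Poisson with finitely many jumps on $(s,0]$ and both terms converge absolutely; the reason for the compensated form is structural (so that $G$ is a martingale differentially subordinate to $F$), not convergence. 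Otherwise the computation is correct and yields exactly the symbol: the $u$-integral of $e^{2v\,\Re\psi(\xi)}$ over $(s,0)$ contributes $\bigl[1-e^{2s\,\Re\psi(\xi)}\bigr]/(2\Re\psi(\xi))$, and the factor $2\Re\psi(\xi)$ in the denominator is what leaves the ratio of $\int(1-\cos\langle\xi,z\rangle)\phi(z)\nu(\mathrm{d}z)$ to $\int(1-\cos\langle\xi,z\rangle)\nu(\mathrm{d}z)$ in $M_{s,\phi,\nu}$.
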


We are ready to prove Theorem \ref{mainthm}.

\begin{proof}[Proof of \eqref{mainin}] Fix subsets $D$, $E$ of $\R^d$ as in the statement. 
We may and do assume that at least one of the measures $\mu$, $\nu$ is nonzero.
It is convenient to split the reasoning into two parts.

\smallskip

\emph{Step 1.} 
First we show the estimate for the multipliers of the form
\begin{equation}\label{defM}  M_{\phi,\nu}(\xi)=
\frac{\int_{\R^d}(1-\cos\langle \xi, z\rangle)\phi(z)\nu(\mbox{d}z)}{\int_{\R^d}(1-\cos\langle \xi, z\rangle)\nu(\mbox{d}z)}.
\end{equation}
Assume that $0<\nu(\R^d)<\infty$, so that the above machinery using L\'evy processes is applicable.  Fix $s<0$ and functions $f,\,g\in C_0^\infty(\R^d)$ such that $f$ takes values in $[0,1]$, while $g$ takes values in the unit ball of $\mathbb{C}$ and is supported on $D$. Of course, then the martingale $F^{x,s,f}$  takes values in $[0,1]$. By  Fubini's theorem and \eqref{martin2}, we have, for any $\lambda\geq 1/2$,
\begin{equation*}
\begin{split}
 & \left|\int_{\R^d}\E \big[G_0^{x,s,f,\phi}(1-F_0^{x,s,f})g(x+X_{s,0})\big]\mbox{d}x\right|\\
&\qquad \leq \int_{\R^d} \E |G_0^{x,s,f,\phi}|(1-F_0^{x,s,f})1_{\{x+X_{s,0}\in D\}}\mbox{d}x\\
 &\qquad \leq C(\lambda)\int_{\R^d}\E F_0^{x,s,f}\mbox{d}x+\lambda \int_{\R^d}\E (1-F_0^{x,s,f})1_{\{x+X_{s,0}\in D\}}\mbox{d}x\\
 &\qquad =C(\lambda)||f||_{L^1(\R^d)}+\lambda\int_{\R^d}\E (1-F_0^{x,s,f})1_{\{x+X_{s,0}\in D\}}\mbox{d}x.
\end{split}
\end{equation*}
Now we apply the definition of $\mathcal{S}$: note that $1-F_0^{x,s,f}=1-\mathcal{U}_f(0,x+X_{s,0})=1-f(x+X_{s,0})$ is a function of $x+X_{s,0}$, so \eqref{defS} with $\tilde{g}(x)=(1-f(x))g(x)$, we get
$$ \left|\int_{\R^d}\mathcal{S}f(x)(1-f(x))g(x)\mbox{d}x\right|\leq C(\lambda)||f||_{L^1(\R^d)}+\lambda\int_{\R^d}\E (1-F_0^{x,s,f})1_{\{x+X_{s,0}\in D\}}\mbox{d}x.$$
Taking the supremum over all $g$ as above, we obtain
\begin{equation}\label{od}
 \int_D |\mathcal{S}^{s,\phi,\nu}f(x)|(1-f(x))\mbox{d}x\leq  C(\lambda)||f||_{L^1(\R^d)}+\lambda\int_{\R^d}\E (1-F_0^{x,s,f})1_{\{x+X_{s,0}\in D\}}\mbox{d}x.
\end{equation}
Now if we let $s\to -\infty$, then $M_{s,\phi,\nu}$ converges pointwise to the multiplier $M_{\phi,\nu}$ given by \eqref{defM}.  
By Plancherel's theorem, $\mathcal{S}^{s,\phi,\nu}f \to T_{M_{\phi,\nu}}f$ in $L^2(\R^d)$ and hence there is a sequence $(s_n)_{n=1}^\infty$ converging to $-\infty$ such that  $\lim_{n\to\infty}\mathcal{S}^{s_n,\phi,\nu}f \to T_{M_{\phi,\nu}}f$ almost everywhere. Thus Fatou's lemma combined with \eqref{od} yields the bound
$$\int_D |T_{M_{\phi,\nu}}f(x)|(1-f(x))\mbox{d}x\leq C(\lambda)||f||_{L^1(\R^d)}+\lambda\int_{\R^d}\E (1-F_0^{x,s,f})1_{\{x+X_{s,0}\in D\}}\mbox{d}x.$$
Using standard approximation arguments, we see that the above bound holds true also for $f=\chi_E$. For such a choice of $f$, we get
$$ (1-F_0^{x,s,f})1_{\{x+X_{s,0}\in D\}}=1_{\{x+X_{s,0}\notin E\}}1_{\{x+X_{s,0}\in D\}}=1_{\{x+X_{s,0}\in D\setminus E\}},$$
so by Fubini's theorem, we obtain
\begin{equation}\label{wtmw}
 \int_{D\setminus E} |T_{M_{\phi,\nu}}\chi_E(x)|\mbox{d}x\leq C(\lambda)|E|+\lambda|D\setminus E|.
\end{equation}
 
\emph{Step 2.} Now we deduce the result for the general multipliers as in \eqref{defm} and drop the assumption $0<\nu(\R^d)<\infty$. For a given $\e>0$, define a L\'evy measure $\nu_\e$ in polar coordinates $(r,\theta)\in (0,\infty)\times \mathbb{S}$ by
$$ \nu_\e(\mbox{d}r\mbox{d}\theta)=\e^{-2}\delta_\e(\mbox{d}r)\mu(d\theta).$$
Here $\delta_\e$ denotes Dirac measure on $\{\e\}$. Next, consider a multiplier $M_{\e,\phi,\psi,\mu,\nu}$ as in \eqref{defM}, in which the L\'evy measure is $1_{\{|x|> \e\}}\nu+\nu_\e$ and the jump modulator is  given by $1_{\{|x|>\e\}}\phi(x)+1_{\{|x|=\e\}}\psi(x/|x|)$. Note that this L\'evy measure is finite and nonzero, at least for sufficiently small $\e$. If we let $\e\to 0$, we see that
\begin{equation*}
\begin{split}
 \int_{\R^d}[1-\cos\langle \xi,x\rangle]\psi(x/|x|)\nu_\e(\mbox{d}x)&=\int_{\mathbb{S}}\langle \xi,\theta\rangle^2\phi(\theta)\frac{1-\cos\langle \xi,\e\theta\rangle}{\langle \xi,\e\theta\rangle^2}\mu(d\theta)\\
 &\to \frac{1}{2}\int_{\mathbb{S}}\langle \xi,\theta\rangle^2\phi(\theta)\mu(\mbox{d}\theta)
 \end{split}
 \end{equation*}
 and, consequently, $M_{\e,\phi,\psi,\mu,\nu} \to m_{\phi,\psi,\mu,\nu}$ pointwise.  
 Thus \eqref{wtmw} yields 
\begin{equation}\label{wtmw2}
 \int_{D\setminus E} |T_{m}\chi_E(x)|\mbox{d}x\leq C(\lambda)|E|+\lambda|D\setminus E|.
\end{equation}
Indeed, using Plancherel's theorem as above, we see that there is a sequence $(\e_n)_{n\geq 1}$ converging to $0$ such that $T_{M_{\e_n,\phi,\psi,\mu,\nu}}\chi_E\to T_{m_{\phi,\psi,\mu,\nu}}\chi_E$ almost everywhere. It suffices to apply Fatou's lemma.

The next step is to optimize the right-hand side of \eqref{wtmw2} over $\lambda$. A straightforward analysis of the derivative shows that the following choices yield best bounds. If $|E|< |D|/2$, then taking $\lambda=|E|/|D|< 1/2$ gives
$$ \int_{D\setminus E} |T_{m}\chi_E(x)|\mbox{d}x\leq |E|+|E|\ln\left(\frac{|D|}{2|E|}\right). $$
On the other hand, if $|E|>|D|/2$, then we take $\lambda =\frac{1}{2}+\frac{1}{2}\ln\left(\frac{|E|}{|D\setminus E|}\right)$ and obtain
$$ \int_{D\setminus E} |T_{m}\chi_E(x)|\mbox{d}x\leq |D\setminus E|+\frac{1}{2}|D \setminus E|\ln\left( \frac{|E|}{|D\setminus E|}\right).$$
This completes the proof of the desired bound.
\end{proof}

\begin{remark}
The paper \cite{AppBan} contains the construction of a wider class of linear operators on $\R^d$, and more general Lie groups, for which the inequality  \eqref{mainin} holds true (which can be shown by modifying the above proof). These operators  are not necessarily Fourier multipliers, see \cite[Theorem 3.3 and Corollary 5.1]{AppBan}, but they do include  the class  multipliers of Laplace transform-type (again, both on $\R^d$ and on compact Lie groups) as well as second-order Riesz transforms on compact groups.  

For example, if $h_t(x)$ denotes the Gaussian kernel in $\bR^d$, the family of operators 
\begin{equation}\label{moregenral}
T_Af(x)=\int_{\bR^d}K(x, z)f(z) dz,
\end{equation}
where
$$
K(x, z)=\int_{0}^{\infty}\int_{\bR^d} \langle{A(y, t)\nabla h_t(x-y), \nabla h_t(y-z)}\rangle dy dt 
$$
under the assumption that $A(y, t)$ is $d\times d$ matrix valued function on $\bR^d\times [0, \infty)$ with
$$
\|A\|=\| |A(y,t)|\|_{L^{\infty} (\bR^d\times [0,+\infty))}\leq 1,
$$ $
 |A(y, t)|=\sup\{|A(y, t)\xi|;  \xi\in \bR^d, |\xi|=1\}$, are examples of the operators arising from \cite[Theorem 3.3 and Corollary 5.1]{AppBan} and for which the conclusion of Theorem \ref{mainthm} applies.  When $A=(a_{ij})$ is constant, $T_A=\sum_{i,j}a_{ij}R_iR_j$ where $R_i$, $R_j$ are the Riesz transforms.   When $A=a(t)I,$ $I$ the $d\times d$ identity matrix and $a$ a bounded function, we get  the operators of Laplace transform-type. Under the general assumptions above, $T_A$ is not necessarily convolution-type.  

\end{remark}

\section{Proof of Theorem \ref{sharpness}}

\def\diag{\operatorname*{diag}}
Our proof will actually show  that for any $\eta<1$ there are subsets $E$, $D$ of the unit disc $\bD$ such that $E\subset D$, $|E|$ is arbitrarily close to $0$ and
 \begin{equation}\label{sharp1}
 \int_{\bD\setminus E} |(\Re{B})\chi_E(x)|\mbox{d}x> \eta\left[|E|\ln\left(\frac{e|D|}{2|E|}\right)\right].
 \end{equation}
To accomplish this, we will make use of a combination of various analytic and probabilistic facts. It is convenient to split this section into a several separate parts. Our arguments here are very similar to those used by Os\c ekowski in \cite{O5} to prove the sharpness of restricted weak-type
estimates for the L\'evy multipliers.  For the convince of the reader, and in order to make this section as self contained as possible, we recall the preliminaries on laminates and their connections to martingales from \cite{BSV} and \cite{O5}, Section 4.2. 

\subsection{Laminates} Let  $\R^{m\times n}$ denote  the space of all real matrices of dimension  $m\times n$ and  $\R^{n\times n}_{sym}$ denote  the subclass of $\R^{n\times n}$ which consists of all real symmetric $n\times n$ matrices.

\begin{dfn}
A function $f:\R^{m\times n} \to \R$ is said to be {\it rank-one convex}, if  for all $A,B \in \R^{m\times n}$ with $\textrm{rank }B= 1$, the function 
$t\mapsto f(A+tB)$ is convex
\end{dfn}

For other equivalent definitions of rank-one convexity, see \cite[p.~100]{Dac},  
Now let  $\mathcal{P}=\mathcal{P}(\R^{m\times n})$ be the class of all compactly supported probability measures on $\R^{m \times n}$.
For $\nu \in \mathcal{P}$, we denote by $$\overline{\nu} = \int_{\R^{m\times n}}{X d\nu(X)}$$ the \emph{center of mass} or \textit{barycenter} of $\nu.$

\begin{dfn}
We say that a measure $\nu \in \mathcal{P}$ is a \textit{laminate} (and write $\nu\in\mathcal{L}$), if 
\begin{equation*}
f(\overline{\nu}) \leq \int_{\R^{m\times n}}f \mbox{d}\nu
\end{equation*} 
for all rank-one convex functions $f$. The set of laminates with barycenter $0$ is denoted by $\mathcal{L}_0(\R^{m\times n})$. 
\end{dfn}
Laminates can be used to obtain lower bounds for solutions of certain PDEs, as was first noticed by  Faraco in \cite{F}. Furthermore, laminates arise naturally in several applications of convex integration, where can be used to produce interesting counterexamples, see e.g. \cite{AFS}, \cite{CFM}, \cite{KMS}, \cite{MS99} and \cite{SzCI}. For our results here we will be interested in the case of $2\times 2$ symmetric matrices. An important observation to make is  that laminates can be regarded as probability measures that record the distribution of the gradients of smooth maps as described by Corollary \ref{coro} below. We briefly explain this and refer the reader \cite{Kirchheim}, \cite{MS99} and \cite{SzCI} for full details.  

\begin{dfn}
Let $U$ be a subset of $\R^{2\times 2}$ and let $\mathcal{PL}(U)$ denote  the smallest
class of probability measures on $U$ which 

\begin{itemize}

\item[(i)] contains all measures of the form $\lambda \delta_A+(1-\lambda)\delta_B$ with $\lambda\in [0,1]$ and satisfying $\textrm{rank}(A-B)=1$;

\item[(ii)] is closed under splitting in the following sense: if $\lambda\delta_A+(1-\lambda)\nu$ belongs to $\mathcal{PL}(U)$ for some $\nu\in\mathcal{P}(\R^{2\times 2})$ and $\mu$ also belongs to $\mathcal{PL}(U)$ with $\overline{\mu}=A$, then also $\lambda\mu+(1-\lambda)\nu$ belongs to $\mathcal{PL}(U)$.
\end{itemize}

The class $\mathcal{PL}(U)$ is called the  \emph{prelaminates} in $U$. 
\end{dfn} 

It follows immediately from the definition that the class $\mathcal{PL}(U)$ only contains atomic measures. Also, by a successive application of Jensen's inequality, we have the inclusion $\mathcal{PL}\subset\mathcal{L}$. The following are two well known lemmas in the theory of laminates; see \cite{AFS}, \cite{Kirchheim}, \cite{MS99}, \cite{SzCI}. 

\begin{lemma}
Let $\nu=\sum_{i=1}^N\lambda_i\delta_{A_i}\in\mathcal{PL}(\R^{2\times 2}_{sym})$ with $\overline{\nu}=0$. Moreover, let
$0<r<\tfrac{1}{2}\min|A_i-A_j|$ and $\delta>0$. For any bounded domain $\calB\subset\R^2$ there exists $u\in W^{2,\infty}_0(\calB)$ such that $\|u\|_{C^1}<\delta$ and for all $i=1\dots N$
$$
\bigl|\{x\in\calB:\,|D^2u(x)-A_i|<r\}\bigr|=\lambda_i|\calB|.
$$
\end{lemma}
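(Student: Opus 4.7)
The plan is induction on the depth $k$ of the prelaminate, i.e.\ on the minimal number of splitting operations used to build $\nu$ starting from a two-atom rank-one measure. For the base case $\nu=\lambda\delta_A+(1-\lambda)\delta_B$, the symmetry of $A,B$ together with $\mathrm{rank}(A-B)=1$ forces $A-B=c\,n\otimes n$ for some $n\in\uS$ and $c\in\R$, while $\overline{\nu}=0$ gives $\lambda A+(1-\lambda)B=0$. Inside a small rectangle $Q_\sigma\subset\calB$ of diameter $\sigma\ll 1$ aligned with $n$, I would take $u(x)=\tfrac{1}{2}x^\top Bx+w(n\cdot x)$, where $w$ is a compactly supported piecewise-quadratic function of one variable whose second derivative equals $c$ on a set of relative density $\lambda$ and vanishes on the complement. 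The Hessian of $u$ is then $A$ or $B$ on the bulk of $Q_\sigma$; a smooth boundary cut-off confines $u$ to $Q_\sigma$ at the cost of a thin transition layer of small measure. Tiling $\calB$ by disjoint translates of $Q_\sigma$ and extending by zero produces $u\in W^{2,\infty}_0(\calB)$, and the $C^1$-norm is $O(\sigma)$ because the natural rescaling $u(x)\mapsto\sigma^2 u(x/\sigma)$ shrinks $\nabla u$ and $u$ by factors of $\sigma$ and $\sigma^2$ respectively without affecting $D^2u$.

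For the inductive step, write $\nu=\lambda\mu+(1-\lambda)\nu'$ where $\lambda\delta_A+(1-\lambda)\nu'\in\mathcal{PL}$ has depth less than $k$ and $\mu\in\mathcal{PL}$ has barycenter $A$ and depth less than $k$. Apply the inductive hypothesis to the coarser measure on $\calB$ with $C^1$-budget $\delta/2$ to obtain $v\in W^{2,\infty}_0(\calB)$; the set $\Omega\subset\calB$ on which $|D^2v-A|<r/2$ has measure exactly $\lambda|\calB|$ and, being a finite disjoint union of tile-pieces $\Omega_j$, can be treated piece by piece. On each $\Omega_j$ apply the inductive hypothesis to the centered measure $\mu-A$ (depth less than $k$, barycenter $0$) with tolerance $r/2$ and $C^1$-budget $2^{-j-1}\delta$; extend the resulting corrector $\psi_j$ by zero and set $u=v+\sum_j\psi_j$. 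Then $D^2u=A+D^2\psi_j$ on $\Omega_j$, whose distribution agrees with $\mu$ to tolerance $r/2$, while $D^2u=D^2v$ outside $\Omega$; the $C^1$-budgets sum to less than $\delta$, and the atoms $A_i$ of $\nu$ receive the prescribed measures.

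The main obstacle is achieving the \emph{exact} identity $|\{x\in\calB:|D^2u(x)-A_i|<r\}|=\lambda_i|\calB|$ rather than a mere approximation. Each step above introduces a thin boundary-layer region, coming from tile edges and from the transition ramps of the sawtooth $w$, where $D^2u$ is neither $A$ nor $B$ nor any of the $A_i$'s; a priori these layers accumulate through the induction. The fix is to iterate the base-case construction recursively inside each leftover layer at progressively smaller scales $\sigma_n\to 0$, distributing the total $C^1$-budget $\delta$ as a geometric series $\sum 2^{-n}\delta$ so that the aggregate correction remains controlled, and performing a Vitali-type exhaustion in measure. The resulting limit $u$ is compactly supported in $\calB$, lies in $W^{2,\infty}_0$ by the uniform $D^2u$-bound (all Hessian values stay in a neighbourhood of the finite atom set $\{A_i\}$), has $C^1$-norm below $\delta$ by the summable budget, and its Hessian has the prescribed distribution to tolerance $r$ because the exhaustion leaves only a null set uncovered.
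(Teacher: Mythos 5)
The paper does not give a proof of this lemma; it records it as one of ``two well known lemmas in the theory of laminates'' with pointers to \cite{AFS}, \cite{Kirchheim}, \cite{MS99} and \cite{SzCI}, so there is no in-text argument to compare against. Your outline --- induction on prelaminate depth, a one-dimensional sawtooth aligned with the rank-one direction in the two-atom base case, a scale-by-scale correction for the splitting step, and a Vitali exhaustion with geometric $C^1$-budget to pass from approximate to exact measure --- is the standard strategy underlying those references, and the overall plan is sound.

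Two points, however, are genuine gaps rather than omitted routine detail. First, in the base case you keep only one scale $\sigma$ in play. Since $\lambda A+(1-\lambda)B=0$ forces both $A$ and $B$ to be multiples of $n\otimes n$, your $u$ depends on $n\cdot x$ alone before any cut-off, so a two-dimensional cut-off is unavoidable; writing $\ell$ for the sawtooth period, $\tau$ for the cut-off width and $\sigma$ for the tile size, the Hessian in the cut-off layer picks up terms of order $\ell/\tau$ (from $\nabla\eta\otimes\nabla u$) and $\ell^2/\tau^2$ (from $u\,D^2\eta$), while the layer has relative area of order $\tau/\sigma$; one therefore needs the scale separation $\ell\ll\tau\ll\sigma$ to keep $D^2u$ uniformly bounded \emph{and} the layer thin at the same time. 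Your sketch asserts the layer is of small measure without saying why $D^2u$ stays bounded there. Second, the passage from approximate to exact equality --- which you rightly identify as the crux --- is compressed into the phrase ``Vitali-type exhaustion.'' After one pass the leftover is an open set of complicated shape on which $D^2u$ is either a cut-off Hessian or $0$, neither of which lies in $\bigcup_i B_r(A_i)$ in general; one must cover this set by countably many disjoint squares, redo the construction on each so the uncovered fraction drops by a fixed factor at every generation, keep the $L^\infty$-norm of $D^2u$ uniformly bounded across generations by fixing the ratio $\ell:\tau:\sigma$, and finally check that the $C^1$-limit still lies in $W^{2,\infty}_0(\calB)$. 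Relatedly, in the inductive step you treat $\Omega=\{\,|D^2v-A|<r/2\,\}$ as a finite disjoint union of nice domains $\Omega_j$; the lemma gives no such structural information, so you either need a Whitney--Vitali cover of $\Omega$ by countably many squares (consistent with your $2^{-j}$ budgets) or you should strengthen the inductive hypothesis so that $v$ comes with a prescribed cubical decomposition of its level sets. These are exactly the places where the cited sources do the real work.
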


\begin{lemma}
Let $K\subset\R^{2\times 2}_{sym}$ be a compact convex set and $\nu\in\mathcal{L}(\R^{2\times 2}_{sym})$ with $\operatorname*{supp}\nu\subset K$. For any relatively open set $U\subset\R^{2\times 2}_{sym}$ 
with $K\subset\subset U$,  there exists a sequence $\nu_j\in \mathcal{PL}(U)$ of prelaminates with $\overline{\nu}_j=\overline{\nu}$ and $\nu_j\overset{*}{\rightharpoonup}\nu$, where $\overset{*}{\rightharpoonup}$ denotes weak convergence of measures. 
\end{lemma}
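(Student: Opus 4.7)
The plan is a standard Hahn--Banach separation argument, in the spirit of the proofs that laminates coincide with the weak-$*$ closure of prelaminates (compare Pedregal's monograph and Kirchheim's thesis). Fix a compact convex set $K'$ with $K\subset\subset K'\subset\subset U$, view all candidate measures as elements of $C(K')^{*}$ equipped with the weak-$*$ topology, and set
$$S := \overline{\mathrm{conv}\bigl\{\mu\in\mathcal{PL}(U):\bar\mu=\bar\nu,\ \operatorname{supp}\mu\subset K'\bigr\}}^{\,w^{*}}.$$
This is a weak-$*$ closed convex set. The proof has two stages: first show $\nu\in S$, and second extract from $\mathrm{conv}(\mathcal{PL}(U))$ a sequence of \emph{pure} prelaminates (not convex combinations) with barycenter $\bar\nu$ approximating $\nu$. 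The latter uses the trivial rank-one splitting $A=\tfrac{1}{2}(A+\varepsilon e)+\tfrac{1}{2}(A-\varepsilon e)$ for some fixed rank-one direction $e$: given a convex combination $\sum\lambda_k\mu_k$ of prelaminates sharing barycenter $A$, one splits off each branch from such a small rank-one perturbation of $A$ (using property (ii)) and lets $\varepsilon\to 0$ to realize the combination as a weak-$*$ limit of genuine prelaminates.

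For the first stage, suppose $\nu\notin S$. Hahn--Banach produces $f\in C(K')$ and a constant $c$ with $\int f\,d\mu\le c<\int f\,d\nu$ for every $\mu\in S$. Define the \emph{prelaminate envelope}
$$F(A):=\sup\Bigl\{\int f\,d\mu:\mu\in\mathcal{PL}(U),\ \bar\mu=A,\ \operatorname{supp}\mu\subset K'\Bigr\},\qquad A\in K'.$$
Taking $\mu=\delta_A$ gives $F\ge f$. Property (ii) in the definition of $\mathcal{PL}$ forces rank-one concavity of $F$: whenever $A=\lambda B+(1-\lambda)C$ with $B,C\in K'$ and $\mathrm{rank}(B-C)=1$, one first invokes property (i) to obtain the prelaminate $\lambda\delta_B+(1-\lambda)\delta_C$ and then twice applies (ii) to replace each atom by a near-optimal prelaminate with the correct barycenter; the resulting prelaminate has $f$-integral at least $\lambda F(B)+(1-\lambda)F(C)$. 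Since $\nu$ is a laminate supported in $K\subset\subset K'$, applying its defining inequality to the rank-one convex function $-F$ (after the extension step described below) yields
$$F(\bar\nu)\;\geq\;\int F\,d\nu\;\geq\;\int f\,d\nu\;>\;c,$$
which contradicts $F(\bar\nu)\leq c$, the latter following directly from the definition of $F$ together with the separation inequality. Hence $\nu\in S$.

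The main obstacle is extending $-F$, which is defined only on $K'$, to a globally rank-one convex function on $\R^{2\times 2}_{sym}$, since the definition of laminate quantifies over such global test functions. The strict inclusion $K\subset\subset K'$ is precisely what makes this possible: mollify $-F$ with an isotropic kernel of small radius (convolution with an isotropic mollifier preserves rank-one convexity) and then form its pointwise maximum with a sufficiently large affine function that agrees with the mollification on a neighborhood of $K$ but dominates it outside a slightly bigger set still contained in $K'$. The result is rank-one convex on all of $\R^{2\times 2}_{sym}$ and coincides with $-F$ on $K$, which suffices because $\operatorname{supp}\nu\subset K$. Two smaller technical points are the finiteness of $F$ on $K'$ (immediate from $\|f\|_{\infty}<\infty$ since each $\mu$ is a probability measure) and its Borel measurability (automatic from the local Lipschitz regularity of rank-one concave functions in the interior of $K'$).
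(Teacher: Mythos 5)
The paper itself does not prove this lemma; it cites it as well known, referring to \cite{AFS}, \cite{Kirchheim}, \cite{MS99}, \cite{SzCI}. Your skeleton --- Hahn--Banach separation in $C(K')^{*}$, the prelaminate envelope $F$, rank-one concavity of $F$ via the splitting property (ii), and a contradiction from testing $\nu$ against an extension of $-F$ --- is precisely the approach used in those references, so at the level of strategy you are on track. The extraction of pure prelaminates from the convex hull also works, though the splitting should be stated a bit more carefully: given $\sum_k\lambda_k\mu_k$ with all $\bar\mu_k=\bar\nu$, translate each $\mu_k$ along a fixed rank-one direction $e$ by a small amount so that the translated barycenters lie on a rank-one segment through $\bar\nu$ with the weights $\lambda_k$, apply (i) and then (ii), and let the translations go to zero; $\mathcal{PL}(U)$ is translation-invariant for small shifts because rank of differences is preserved.

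There is, however, a genuine error in the step you correctly single out as the main obstacle. You propose to extend the mollified $-F$ by taking its maximum with ``a sufficiently large affine function that agrees with the mollification on a neighborhood of $K$ but dominates it outside a slightly bigger set.'' This cannot work: an affine function cannot agree with a generically non-affine mollification on an open set, and, more fundamentally, an affine function is unbounded below, so it cannot dominate $-F$ in every direction outside a bounded region --- the resulting ``max'' is still only defined on $K'$. The correct competitor must be strictly convex, not affine. For instance, take $q(A)=\alpha\operatorname{dist}(A,K)^2+\beta$, which is convex and hence rank-one convex on all of $\R^{2\times 2}_{sym}$. Since $K\subset\subset\operatorname*{int}K'$, choose a compact set $W$ with $K\subset\subset W\subset\subset\operatorname*{int}K'$, then pick $\beta<\inf_K(-F)$ and $\alpha$ large enough that $q>-F$ near $\partial W$. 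Then $\max(-F,q)$ is rank-one convex on $W$, coincides with $-F$ on $K$, and equals $q$ near $\partial W$; gluing $q$ on outside $W$ gives a globally rank-one convex function agreeing with $-F$ on $K\supset\operatorname*{supp}\nu$, which is exactly what the contradiction needs. Two smaller points: the mollification is superfluous, since rank-one concave functions are locally Lipschitz, hence continuous, on $\operatorname*{int}K'$, which already gives measurability of $F$; and the affine competitor should ``lie strictly below $-F$ on $K$,'' not ``agree'' with it there, so that the maximum (not the competitor) agrees with $-F$ on $K$. With these corrections your argument is sound and reproduces the standard proof.
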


Combining these two lemmas and using a simple mollification, we obtain the following statement, proved by Boros, Sh\'ekelyhidi Jr. and Volberg \cite{BSV}. It exhibits the connection  between laminates supported on symmetric matrices and second derivatives of functions.  This Corollary will play a crucial role in our argumentation below. Recall that (as in the introduction) we use  $\bD$ to denote the unit disc of $\mathbb{C}$.

\begin{corr}\label{coro}
Let $\nu\in\mathcal{L}_0(\R^{2\times 2}_{sym})$. Then there exists a sequence $u_j\in C_0^{\infty}(\bD)$ with uniformly bounded second derivatives, such that
\begin{equation}\label{gby}
\frac{1}{|\bD|}\int_{\bD} \phi(D^2u_j(x))\,\mbox{d}x\,\to\,\int_{\R^{2\times 2}_{sym}}\phi\,\mbox{d}\nu
\end{equation}
for all continuous $\phi:\R^{2\times 2}_{sym}\to\R$. 
\end{corr}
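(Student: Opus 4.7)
The plan is to combine the two preceding lemmas with a mollification and a diagonal extraction. Since $\nu \in \mathcal{L}_0(\R^{2\times 2}_{sym})$ is by definition compactly supported, I would first fix a compact convex set $K \subset \R^{2\times 2}_{sym}$ containing $\operatorname*{supp}\nu$ and a bounded relatively open neighborhood $U$ with $K \subset\subset U$. By the second lemma, there exists a sequence of prelaminates $\nu_j \in \mathcal{PL}(U)$ with $\overline{\nu}_j = 0$ and $\nu_j \overset{*}{\rightharpoonup} \nu$. Writing each prelaminate as $\nu_j = \sum_{i=1}^{N_j} \lambda_i^{(j)} \delta_{A_i^{(j)}}$, the atoms stay in the bounded set $U$, so $|A_i^{(j)}| \leq M$ for some constant $M$ independent of $j$.

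Next, for each $j$ I would apply the first lemma with domain $\calB = \bD$, parameter $\delta = 1/j$, and $r_j > 0$ chosen strictly smaller than $\tfrac{1}{2}\min_{i \neq k} |A_i^{(j)} - A_k^{(j)}|$ and also smaller than $1/j$. This produces $\tilde u_j \in W^{2,\infty}_0(\bD)$ with $\|\tilde u_j\|_{C^1} < 1/j$ and
$$ |\{x \in \bD : |D^2 \tilde u_j(x) - A_i^{(j)}| < r_j\}| = \lambda_i^{(j)}\,|\bD|. $$
In particular $\|D^2 \tilde u_j\|_{L^\infty} \leq M+1$ uniformly in $j$, and for any continuous $\phi : \R^{2\times 2}_{sym} \to \R$,
$$ \left| \frac{1}{|\bD|}\int_{\bD} \phi(D^2 \tilde u_j)\, \mbox{d}x - \int \phi\, \mbox{d}\nu_j \right| \leq \omega_{\phi,M}(r_j) \to 0, $$
where $\omega_{\phi,M}$ is the modulus of continuity of $\phi$ on the ball $\{|A| \leq M+1\}$. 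Combined with $\nu_j \overset{*}{\rightharpoonup} \nu$ (applied to $\phi$, which is continuous on the compact set containing all relevant supports), this yields the target limit for $\tilde u_j$.

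To upgrade $\tilde u_j$ to $C_0^\infty(\bD)$, I would fix a standard mollifier family $\rho_\varepsilon$, premultiply $\tilde u_j$ by a smooth cutoff supported in $(1-\varepsilon_j)\bD$, and convolve with $\rho_{\varepsilon_j}$ for a sequence $\varepsilon_j \downarrow 0$ chosen depending on $j$. The resulting $u_j$ lies in $C_0^\infty(\bD)$; since $D^2(\tilde u_j * \rho_\varepsilon) \to D^2 \tilde u_j$ almost everywhere as $\varepsilon \to 0$ with uniformly bounded $L^\infty$-norm, dominated convergence combined with continuity of $\phi$ gives $\int_{\bD} |\phi(D^2 u_j) - \phi(D^2 \tilde u_j)|\, \mbox{d}x < 1/j$ once $\varepsilon_j$ is chosen sufficiently small. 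A countable-dense choice of $\phi$, together with the uniform compact support of all pushforward measures in $\{|A|\le M+1\}$, turns this into simultaneous convergence for every continuous $\phi$ by a standard approximation argument.

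The main obstacle is precisely the regularization step: the first lemma only supplies $W^{2,\infty}_0$ functions, so a direct convolution would destroy the zero boundary condition, while an overly aggressive cutoff would spoil the prescribed Hessian statistics via the extra lower-order terms appearing through the Leibniz rule. The uniform bound $\|\tilde u_j\|_{C^1} < 1/j$ from the first lemma is what makes the argument work: it guarantees that the zeroth- and first-order contributions produced by differentiating the cutoff are $o(1)$ in $L^\infty$, hence small compared with the fixed atom-separation $r_j$. Coordinating the cutoff radius $1-\varepsilon_j$ with the index $j$ so that all such error terms vanish in the limit is the technically delicate point and is what forces the diagonal construction described above.
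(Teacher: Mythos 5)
Your argument is correct and follows precisely the route the paper indicates: the paper gives no details, saying only that the statement follows ``by combining these two lemmas and using a simple mollification'' and citing Boros, Sz\'ekelyhidi Jr.\ and Volberg for the full proof, and you have filled in exactly that outline (prelaminate approximation via the second lemma, piecewise-affine-Hessian construction via the first, then cutoff-and-mollify with the $C^1$-smallness controlling the Leibniz error terms). One small point worth tightening in your write-up: take the cutoff supported in $(1-2\varepsilon_j)\bD$ rather than $(1-\varepsilon_j)\bD$ so that after convolving with $\rho_{\varepsilon_j}$ the support stays strictly inside $\bD$, and note explicitly that choosing, say, $\varepsilon_j \sim j^{-1/4}$ makes the cutoff error $O(j^{-1/2})$ while keeping $\varepsilon_j\to 0$.
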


The above corollary works for laminates of barycenter $0$. This gives rise to certain technical difficulties, as the natural laminates induced by the martingale examples of Section 2 do not have this property. To overcome the problem, we will have to center the examples as done below.

\subsection{Biconvex functions and a special laminate} 
The next step in our analysis is devoted to the introduction of a certain special laminate. We need some additional notation. 
A function $\zeta:\R\times \R\to \R$ is said to be \emph{biconvex} if for any fixed $z\in \R$, the functions $x\mapsto \zeta(x,z)$ and $y\mapsto \zeta(z,y)$ are convex. Now, for $\lambda\leq 1/2$, let $f$, $g$ be the martingales of Section 2, which exhibit the sharpness of \eqref{martin} (if $\lambda$ is strictly smaller then $1/2$, then there is a whole family of examples, corresponding to different choices of $\kappa$ and $N$ - these two parameters will be specified later). Consider the $\R^2$-valued martingale
$$ (F,G):=\left(\frac{f+g-\kappa}{2},\frac{f-g-\kappa}{2}\right).$$
We subtract $\kappa$ on both coordinates to ensure that the pair $(F,G)$ has mean $(0,0)$. 
This sequence has the following \emph{zigzag} property: for any $n\geq 0$ we have $F_n=F_{n+1}$ with probability $1$ or $G_n=G_{n+1}$ almost surely; that is, in each step $(F,G)$ moves either vertically, or horizontally. Indeed, this follows directly from the construction that for $n\geq 1$ we have $\mathbb{P}(df_n= dg_n)=1$ or $\mathbb{P}(df_n=-dg_n)=1$. This property combines nicely with biconvex functions: if $\zeta$ is such a function, then a successive application of Jensen's inequality gives
\begin{equation}\label{dm} 
\E \zeta(F_{n},G_{n})\geq \E\zeta(F_{n-1},G_{n-1})\geq \ldots\geq \E \zeta(F_0,G_0)=\zeta(0,0).
\end{equation}
The martingale $(F,G)$, or rather the distribution of its terminal variable $(F_{\infty},G_{\infty})$, gives rise to a probability measure $\nu$ on $\R^{2\times 2}_{sym}$: put
$$ \nu\left(\diag(x,y)\right)=\mathbb{P}\big((F_{\infty},G_{\infty})=(x,y)\big),\qquad (x,y)\in \R^2.$$
Here and below, $\diag(x,y)$ denotes the diagonal matrix 
$ \left(\begin{array}{cc}
x & 0\\ 
0 & y
\end{array}\right).$ 
The key observation is that $\nu$ is a laminate of barycenter $0$. To prove this, note that if $\psi:\R^{2\times 2}$ is a rank-one convex, then $(x,y)\mapsto \psi(\operatorname*{diag}(x,y))$ is biconvex and thus, by \eqref{dm}, 
\begin{align*}
 \int_{\R^{2\times 2}}\psi\mbox{d}\nu
&=\E \psi(\operatorname*{diag}(F_{\infty},G_{\infty})) \geq \psi(\operatorname*{diag}(0,0))=\psi(\bar{\nu}).
\end{align*}
Finally, note that $\mathbb{P}\big(F_{\infty}+G_{\infty}\in \{-\kappa,1-\kappa\}\big)=\mathbb{P}(f_{\infty}\in\{0,1\})=1$, and hence the support of $\nu$ is contained in 
\begin{equation}\label{defK}
 K=\big\{\diag(x,y):x+y\in \{-\kappa,1-\kappa\}\big\}.
\end{equation}

\subsection{Proof of \eqref{sharp}}
For the convenience of the reader,  we give an outline here of the ideas behind the arguments below. 
We start with the application Corollary \ref{coro} to the laminate $\nu$: let $(u_j)_{j\geq 1}$ be the corresponding sequence of smooth functions. As we have just observed above, the support of $\nu$ is contained in $K$ given by \eqref{defK}. Since the distribution of $u_j$ is close to $\nu$ (in the sense of Corollary \ref{coro}), we expect that $\Delta u_j$, essentially, takes only values close to $-\kappa$ or close to $1-\kappa$. Thus, if we define  $v_j=\Delta u_j+\kappa \chi_\bD$ for $j=1,\,2,\,\ldots$, then $v_j$ is close to an indicator function of a certain set $E$. Thus, to prove the sharpness of \eqref{sharp}, one can try to study this estimate with $\chi_E$ replaced by $v_j$. We will look separately at the action on $\Re{B}$ on $\Delta u_j$ and $\kappa\chi_{\bD}$.  To handle the Laplacian, we will use the arguments from the previous two subsections, and the term $\kappa \chi_\bD$ will be dealt with directly.

\smallskip

\emph{Step 1.} Fix $\lambda\in (0,1/2)$ and pick the positive number $M=\eta C(\lambda)$ so that  $M<C(\lambda)$.  Then, by the reasoning presented in Section 2, if $\kappa>0$ is sufficiently small and an integer $N$ is large enough, then the corresponding martingale  $(f,g)$ constructed there satisfies $\E(|g_\infty|-\lambda)_+(1-f_\infty)>M \E f_0=M\kappa$. 
Next, let $\e\in (0,1/4)$ be a given number (which will be eventually sent to $0$). In what follows, $C_1$, $C_2$, $C_3$, $\ldots$ will denote constants which depend only on $\kappa$ and $N$.

\smallskip

\emph{Step 2.} Consider a continuous function $\phi:\R^{2\times 2}_{sym}\to \R$ given by $\phi(\diag(x,y))=|x+y+\kappa|$. By Corollary \ref{coro}, and using the fact that $F_\infty+G_\infty+\kappa=f_\infty$, we have
$$ \frac{1}{|\bD|}\int_{\bD}|v_j|=\frac{1}{|\bD|}\int_{\bD}\phi(D^2u_j)\xrightarrow{j\to\infty}\int_{\R^{2\times 2}_{sym}}\phi\mbox{d}\nu
=\E|F_{\infty}+G_{\infty}+\kappa|=\kappa.$$ 
Thus  for sufficiently large $j$,
 \begin{equation}\label{ss1}
 \frac{1}{|\bD|}\int_{\bD}|v_j|\leq \kappa(1+\e).
 \end{equation}

\emph{Step 3.} Consider a continuous function $\phi:\R^{2\times 2}_{sym}\to [0,1]$, satisfying $\phi(\diag(x,y))=1$ when $|x+y-1+\kappa|>\e$ and $|x+y+\kappa|>\e$, and $\phi(\diag(x,y))=0$,  if $x+y+\kappa\in \{0,1\}$. By Corollary \ref{coro},
\begin{equation}\label{diss}
 \frac{1}{|\bD|}\int_\bD\phi(D^2u_j) \to \int_{\R^{2\times 2}_{sym}}\phi\mbox{d}\nu=0,
\end{equation}
since $\mathbb{P}(F_{\infty}+G_{\infty}+\kappa\in \{0,1\})=1$. Consider the sets
$$ {E^j}=\{x\in\bD:|\Delta u_j(x)-1+\kappa|\leq \e\} \quad \text{and} \quad \tilde{{E^j}}=\{x\in\bD:|\Delta u_j(x)+\kappa|\leq \e\}.$$
Then \eqref{diss} implies that
\begin{equation}\label{diss2}
\frac{|\bD\setminus({E^j}\cup\tilde{{E^j}})|}{|\bD|}<\e, \qquad \mbox{for sufficiently large $j$}.
\end{equation}

\emph{Step 4.} Next, consider a continuous function $\phi:\R^{2\times 2}_{sym}\to [0,1]$ which satisfies $\phi(\diag(x,y))=1$ if $x+y+\kappa=1$ and $\phi(\diag(x,y))=0$,  if $|x+y+\kappa-1|>\e$. Then
\begin{equation}\label{beth}
 |{E^j}|\geq \int_\bD\phi(D^2u_j)\xrightarrow{j\to\infty} |\bD|\int_{\R^{2\times 2}_{sym}}\phi\mbox{d}\nu=|\bD|\mathbb{P}(F_{\infty}+G_{\infty}+\kappa=1)=|\bD|\kappa.
 \end{equation}
 An analogous argument, exploiting the function $\phi$ which is $1$ when $|x+y-1+\kappa|\leq \e$ and vanishes for $|x+y-1+\kappa|\geq 2\e$, yields
 $ \limsup_{j\to \infty}|{E^j}|\leq |\bD|\kappa$. As we have mentioned in Step 1, the numbers $\kappa$ we consider are small; this implies that the sets $E^j$ we obtain are of small measure (and in particular, satisfy $|E^j|<|\bD|/2$). 
 
By \eqref{beth}, we get that for any $1\leq q<\infty$ and large $j$,
\begin{align*}
 &||v_j-\chi_{E^j}||_{L^q(\R^2)}^q\\
&=||\Delta u_j+\kappa-\chi_{E^j}||_{L^q(\bD)}^q\\
 &=\int_{E^j} |\Delta u_j+\kappa-\chi_{E^j}|^q+\int_{\tilde{{E^j}}}|\Delta u_j+\kappa-\chi_{E^j}|^q+\int_{\bD\setminus({E^j}\cup\tilde{{E^j}})}|\Delta u_j+\kappa-\chi_{E^j}|^q\\
&\leq \e^q|{E^j}|+\e^q|\tilde{{E^j}}|+\e|\bD|(\sup_{\bD}|\Delta u_j|+\kappa).
\end{align*}
Here in the last passage we have used the definition of ${E^j}$, $\tilde{{E^j}}$ and \eqref{diss2}. Combining this with \eqref{beth} (and the fact that the second-order partial derivatives of $u_j$ are uniformly bounded, see Corollary \ref{coro}), we get that for sufficiently large $j$,
\begin{equation}\label{fory}
 ||v_j-\chi_{E^j}||_{L^q(\R^2)}^q\leq C_1\e|{E^j}|.
 \end{equation}
Thus, the function $v_j$ is close to the indicator function of ${E^j}$.

\smallskip

\emph{Step 5.} Next, consider the function $\phi:\R^{2\times 2}_{sym}\to \R$ given by $\phi(\diag(x,y))=(|x-y|-\lambda)_+(1-x-y-\kappa)$. By the choice of $\kappa$, $N$ and \eqref{ss1},
\begin{align*}
 \frac{1}{|\bD|}\int_{\bD}\phi(D^2u_j)\xrightarrow{j\to\infty}\int_{\R^{2\times 2}_{sym}}\phi\mbox{d}\nu&=\E(|g_{\infty}|-\lambda)_+(1-f_\infty)\\
&>M\kappa\\
&\geq \frac{M}{1+\e}\cdot \frac{1}{|\bD|}\int_{\bD}|v_j|\\
&\geq \frac{M}{1+\e}\cdot\frac{1}{|\bD|}\left(|{E^j}|-\int_{\bD}|v_j-\chi_{E^j}|\right).
\end{align*}
Now multiply throughout by $|\bD|$ and apply \eqref{fory} with $q=1$ to get  that for sufficiently large $j$,
$$  \int_{\bD}\phi(D^2u_j)\geq \frac{M}{1+\e}(1-C_1\e)|{E^j}|.$$
However, observe that
\begin{align*}
 \phi(D^2u_j)&=(|\partial_{11}u_j-\partial_{22}u_j|-\lambda)_+(1-\Delta u_j-\kappa)\\
&=(|\Re{B}\Delta u_j|-\lambda)_+(1-v_j)\\
&=(|\Re{B}v_j|-\lambda)_+(1-v_j)
\end{align*}
on $\bD$. In the last line we have used the fact that $B\chi_\bD=0$ on $\bD$.  Actually, as already mentioned in \eqref{B-disc},   
$ {B}\chi_\bD(z)=-\chi_{\mathbb{C}\setminus \bD}(z)/z^2$. 
Hence, the preceding considerations yield that for large $j$,
\begin{align*}
\frac{M}{1+\e}(1-C_1\e)|{E^j}|&\leq \int_\bD(|\Re{B}v_j|-\lambda)_+(1-v_j)\leq I_1+I_2+I_3,
\end{align*}
where
\begin{align*}
I_1&=\int_\bD(|\Re{B}v_j|-\lambda)_+(\chi_{E^j}-v_j),\\
I_2&=\int_\bD|\Re{B}(\chi_{E^j}-v_j)|(1-\chi_{E^j}),\\
I_3&=\int_\bD(|\Re{B}(\chi_{E^j})|-\lambda)_+(1-\chi_{E^j}).
\end{align*}
Since ${B}$ is an isometry on $L^2(\R^2)$, an application of Schwarz inequality, \eqref{fory} and  \eqref{beth} give that  $I_1+I_2\leq C_2\e^{1/2}|{E^j}|$.  Putting all the above facts together, we get that if $j$ is sufficiently large, then
$$ \int_\bD(|\Re{B}(\chi_{E^j})|-\lambda)_+(1-\chi_{E^j})\geq \frac{M}{1+\e}(1-C_3\e^{1/2})|{E^j}|.$$
Therefore, if we consider the set $D^j=\{z \in \bD: |\Re{B}(\chi_{E^j})|\geq \lambda \}\cup E^j$, we obtain
\begin{align*}
 \int_{D^j\setminus {E^j}} |\Re{B}(\chi_{E^j})|&\geq \frac{M}{1+\e}(1-C_3\e^{1/2})|{E^j}|+\lambda |D^j\setminus {E^j}|\\
 &\geq \frac{(1-C_3\e^{1/2})\eta}{1+\e}\big[C(\lambda)|{E^j}|+\lambda|D^j\setminus {E^j}|\big].
\end{align*}
A simple analysis of the derivative in $\lambda$ shows that the right-hand side is not smaller than
$$ \frac{(1-C_3\e^{1/2})\eta}{1+\e}\left[|{E^j}|+|{E^j}|\ln\left(\frac{|D^j|}{2|{E^j}|}\right)\right]=\frac{(1-C_3\e^{1/2})\eta}{1+\e}\left[|{E^j}|\ln\left(\frac{e|D^j|}{2|{E^j}|}\right)\right].$$
Using the fact that  $\e>0$ was arbitrary, this completes the proof of Theorem \ref{sharpness} and shows the  optimality of \eqref{mainin}. \qed

\bigskip

\subsection*{Acknowledgments}  Rodrigo Ba\~nuelos gratefully acknowledges useful conversations with Eero Saksman on the subject of this paper.

\end{document}